\documentclass{amsart}

\usepackage{amsmath,
amsfonts,
amsthm,
amssymb,
bbold,
mathrsfs}

\renewcommand\subset{\subseteq}

\newcommand{\dom}{\operatorname{dom}}
\newcommand{\lex}{\operatorname{lex}}
\newcommand{\comment}[1]{}

\newcommand{\Ht}{\operatorname{ht}}
\newcommand{\PFA}{\mathrm{PFA}}
\newcommand{\Ecal}{\mathcal{E}}
\newcommand{\MA}{\mathrm{MA}}
\newcommand{\Seq}[1]{\langle #1 \rangle}
\newcommand{\FA}{\mathrm{FA}}

\newcommand{\Mscr}{\mathscr{M}}

\newcommand{\Qbb}{\mathbb{Q}}
\newcommand{\Rbb}{\mathbb{R}}

\newcommand{\range}{\mathrm{range}}

\theoremstyle{plain}
\newtheorem{thm}{Theorem}[section]
\newtheorem{lem}[thm]{Lemma}
\newtheorem{prop}[thm]{Proposition}

\newtheorem{fact}[thm]{Fact}
\newtheorem{claim}[thm]{Claim}
\newtheorem{question}[thm]{Question}

\theoremstyle{definition}
\newtheorem{defn}[thm]{Definition}
\newtheorem{rmk}[thm]{Remark}

\newtheorem{notation}[thm]{Notation}

\begin{document}

\author[H. Lamei Ramandi]{Hossein Lamei Ramandi}
\author[J. Tatch Moore]{Justin Tatch Moore}

\address{Department of Mathematics \\ Cornell University\\
Ithaca, NY 14853--4201 \\ USA}

\title[Non $\sigma$-scattered orders]{There may be no minimal non $\sigma$-scattered linear orders}

\subjclass[2010]{Primary: 03E35; Secondary: 03E05, 03E57}
\keywords{linear order, proper forcing, $\sigma$-scattered}

\email{{\tt hossein@math.cornell.edu}}
\email{{\tt justin@math.cornell.edu}}

\begin{abstract}
In this paper we demonstrate that it is consistent, relative to the existence of a supercompact
cardinal, that there is no linear order
which is minimal with respect to being non $\sigma$-scattered.
This shows that a theorem of Laver, which asserts that the class of $\sigma$-scattered linear
orders is well quasi-ordered, is sharp.
We also prove that $\PFA^+$ implies that every non $\sigma$-scattered linear
order either contains a real type, an Aronszajn type, or a ladder system
indexed by a stationary subset of $\omega_1$,
equipped with either the lexicographic or reverse lexicographic order.
Our work immediately implies that $CH$ is consistent with ``no Aronszajn tree has a base of cardinality $\aleph_1$.''
This gives an affirmative answer to a problem due to Baumgartner.
\end{abstract}

\maketitle

\section{Introduction}

In \cite{Fraisse_conj}, Laver verified a longstanding conjecture of Fra\"iss\'e:
the countable linear orders are \emph{well quasi-ordered} by embeddability.
That is to say if $L_i$ $(i < \infty)$ is an infinite sequence of countable linear orderings, then
there is an $i < j$ such that $L_i$ is embeddable into $L_j$.
In fact, Laver proved the following stronger result.

\begin{thm} \label{Laver_thm} \cite{Fraisse_conj}
The class $\mathscr{M}$ of $\sigma$-scattered linear orders is well quasi-ordered by embeddability.
\end{thm}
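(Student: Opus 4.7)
The plan is to prove the stronger statement that $\Mscr$ is \emph{better-quasi-ordered} (bqo) in the sense of Nash--Williams; since every bqo is automatically wqo, this suffices for the theorem. The point of working with bqo rather than wqo is that bqo-ness is preserved under many infinitary operations (labeled transfinite sequences, countable powers, and the like) under which wqo in general is \emph{not} preserved, and these are precisely the operations that appear when one unwinds the structure of a $\sigma$-scattered linear order.

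First I would establish that the subclass of scattered linear orders is bqo. The main tool here is Hausdorff's classical structure theorem: scattered orders are exactly those obtainable from singletons by iteratively forming sums indexed by ordinals or reverse ordinals, and each scattered order therefore has a well-defined Hausdorff rank. One then proves by transfinite induction on this rank that the class of scattered orders of rank $<\alpha$ is bqo. The inductive step invokes the Nash--Williams/Laver theorem that if $Q$ is bqo then the class of $Q$-labeled transfinite sequences, under the natural embeddability, is again bqo; applying this with $Q$ the class of lower-rank scattered orders produces a bqo class covering the next rank level.

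The principal obstacle is that a linear embedding between two sums $\sum_{\beta<\gamma} L_\beta$ and $\sum_{\beta<\gamma'} L'_\beta$ need not respect the summand decomposition, so one cannot naively reduce embeddability of sums to embeddability of labeled sequences. One must instead work with a notion of labeled embedding that is robust enough to feed into the Nash--Williams/Laver machinery while still implying genuine order embeddability, and then extract the desired contradiction via the minimal bad array technique that lies at the heart of bqo theory. This is exactly where bqo is indispensable, since the corresponding preservation results are known to fail at the level of wqo.

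Finally, to extend from scattered to $\sigma$-scattered, I would code each $\sigma$-scattered $L$ together with a witnessing decomposition $\{S_n : n<\omega\}$ into scattered suborders as a countably labeled object, and apply the bqo-preservation theorem for countable powers of a bqo to the scattered class just handled. Provided that embeddability of these labeled codes can be arranged to imply embeddability of the underlying linear orders, this yields that $\Mscr$ itself is bqo, and hence wqo, completing the proof.
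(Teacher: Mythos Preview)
The paper does not give a proof of this theorem at all; it is quoted from Laver \cite{Fraisse_conj} as background and motivation, with no argument supplied. There is therefore nothing in the present paper to compare your sketch against.

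As for the sketch itself, the overall strategy of passing to bqo and invoking Nash--Williams' minimal bad array machinery is indeed Laver's, but two steps in your outline do not go through as stated. First, the transfinite induction on Hausdorff rank is problematic at limit stages: an increasing union of bqo classes need not be bqo, so knowing that each rank-$<\alpha$ class is bqo does not by itself give the result for rank-$<\lambda$ at limit $\lambda$. Laver avoids this by running a single global minimal bad array argument on the whole class rather than an induction on rank. Second, and more seriously, your passage from scattered to $\sigma$-scattered via the ``countable power'' of the scattered bqo has a genuine gap, which you partially flag with the hedge ``provided that embeddability of these labeled codes can be arranged to imply embeddability of the underlying linear orders.'' It cannot be so arranged in any direct way: if $L=\bigcup_n S_n$ and $L'=\bigcup_n S'_n$ and you obtain embeddings of each $S_n$ into some $S'_{m}$, these embeddings need not be compatible with one another and so do not assemble into an embedding of $L$ into $L'$. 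Laver handles $\sigma$-scattered orders not by a separate reduction of this kind but by giving an inductive generating description of the class $\mathscr{M}$ that incorporates the $\sigma$-scattered case from the outset, and then running the bad-array argument once for the whole class.
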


\noindent
Recall that a linear order is \emph{scattered} if it does not contain an
isomorphic copy of the linear order $(\Qbb,\leq)$ and is \emph{$\sigma$-scattered}
if it is a union of countably many scattered suborders.

In the final paragraph of \cite{Fraisse_conj}, Laver writes,
``Finally, the question arises as to how the order types outside of
$\Mscr$ behave
under embeddability.''
For instance, is there a class of linear orders which is closed under taking suborders,
which properly includes the class of $\sigma$-scattered linear orders,
and which is well quasi-ordered by embeddability?
Cast in another way, is there a non $\sigma$-scattered linear order which embeds into 
all of its non $\sigma$-scattered suborders?

Already in \cite{reals_isomorphic}, Baumgartner proved that it is consistent that
any two \emph{$\aleph_1$-dense} sets of reals are isomorphic; in fact this conclusion is a consequence
of the Proper Forcing Axiom (PFA).
Here a linear order is \emph{$\kappa$-dense} if all of its intervals have cardinality $\kappa$.
It is not difficult to show that any suborder of $\Rbb$ of cardinality $\aleph_1$ is
bi-embeddable with an $\aleph_1$-dense set of reals and thus in Baumgartner's model,
any set of reals of cardinality $\aleph_1$ is minimal with respect to being non $\sigma$-scattered.
On the other hand, it follows easily from work of Dushnik and Miller \cite{sim_trans_LO} that
the Continuum Hypothesis (CH) implies that there are no minimal uncountable linear orders
which are separable.
(In fact Dushnik and Miller show in ZFC that there is no minimal separable linear order of cardinality
continuum.)

The main result of this paper is that Theorem \ref{Laver_thm} is consistently sharp.

\begin{thm} \label{main_thm}
If there is a supercompact cardinal,
then there is a forcing extension which satisfies CH in which there are no minimal
non $\sigma$-scattered linear orders.
\end{thm}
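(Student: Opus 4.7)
The plan is to begin in a model of $\PFA^+$ obtained from the supercompact cardinal by Baumgartner's standard construction, and then perform a countable-support $\sigma$-closed iteration of length $\omega_2$ that first forces CH and then destroys minimality for each of the three canonical types picked out by the trichotomy stated in the abstract. The $\sigma$-closedness is essential here: no reals are added along the way, so the trichotomy continues to apply to every non-$\sigma$-scattered linear order in the final model, and each such order still contains either a real type, an Aronszajn type, or a lexicographic (or reverse-lexicographic) ladder system type indexed by a stationary subset of $\omega_1$.

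It then suffices to arrange that no representative of any of these three types is minimal in the final extension. The initial iterand collapses the $\PFA^+$-continuum (of size $\aleph_2$) to $\aleph_1$ by a $\sigma$-closed collapse, securing CH; by the Dushnik--Miller argument referenced in the introduction, this immediately rules out minimality for separable orders and thus handles the real case. The remaining stages form a countable-support iteration $\Seq{\mathbb{P}_\alpha, \dot{\mathbb{Q}}_\alpha : \alpha < \omega_2}$ of $\sigma$-closed forcings of size $\aleph_1$, with bookkeeping that at stage $\alpha$ names a candidate order $L_\alpha$ of Aronszajn or lex-ladder type and chooses $\dot{\mathbb{Q}}_\alpha$ so as to introduce a non-$\sigma$-scattered suborder $L_\alpha' \subseteq L_\alpha$ into which $L_\alpha$ does not embed. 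For Aronszajn lines, $\dot{\mathbb{Q}}_\alpha$ generically thins the underlying tree so as to block every embedding of the whole line, effectively witnessing the corollary advertised in the abstract that CH is consistent with ``no Aronszajn tree has a base of cardinality $\aleph_1$.'' The ladder-system case is analogous, refining the ladder and its stationary index set so that the new order remains non-$\sigma$-scattered but no longer contains a copy of $L_\alpha$.

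The main obstacle is the reflection argument that closes off the iteration: one must verify that every potential minimal non-$\sigma$-scattered order in the final extension has been addressed at some bounded stage. Because the iteration is $\sigma$-closed and (under CH at each stage) $\aleph_2$-cc via a standard $\Delta$-system argument, every $\aleph_1$-sized linear order in $V[G_{\omega_2}]$ already appears in some $V[G_\alpha]$ with $\alpha < \omega_2$, and a $\diamondsuit$-style bookkeeping on $\omega_2$ then catches every such order along the iteration. Combining this with the trichotomy---which continues to hold in the extension since no reals are added after the $\PFA^+$ stage---one concludes that any candidate minimal non-$\sigma$-scattered order in $V[G_{\omega_2}]$ contains one of the three canonical types, each of which has been rendered non-minimal, so the final model satisfies both CH and the absence of minimal non-$\sigma$-scattered linear orders.
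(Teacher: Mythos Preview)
Your proposal has two related gaps that prevent it from going through as written.  First, the trichotomy of Theorem~\ref{basis_thm} does not survive your $\sigma$-closed iteration.  You justify its persistence by ``no reals are added,'' but the trichotomy is a statement about \emph{all} linear orders, and $\sigma$-closed forcing certainly adds new non $\sigma$-scattered linear orders (for instance Baumgartner types indexed by newly added stationary subsets of $\omega_1$).  More concretely, the proof of the trichotomy goes through Theorem~\ref{basis}, which uses $\MA_{\omega_1}$; your very first move---collapsing the $\PFA^+$ continuum to $\aleph_1$---yields CH and hence destroys $\MA_{\omega_1}$.  So you have no handle on non $\sigma$-scattered orders in the final model that are neither real nor Aronszajn.

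Second, the claim that a $\sigma$-closed iterand can ``generically thin the underlying tree so as to block every embedding'' of an Aronszajn line is unsupported; the only known route to ``no minimal Aronszajn line'' under CH is via the axiom $(\textrm{A})$ (Theorem~\ref{Aronszajn}), and the forcings $Q_{T,\bar f}$ that produce $(\textrm{A})$ are \emph{not} $\sigma$-closed---they are completely proper and $(<\!\omega_1)$-proper (Lemma~\ref{Qtf_lem}).  The paper's actual argument does not pass through a $\PFA^+$ model at all: it builds a single countable-support iteration over a CH ground model, guided by a Laver function, whose iterands are either $\sigma$-closed or of the form $Q_{T,\bar f}$, and shows that the resulting extension satisfies CH together with the axioms $(\textrm{A})$, $(\dagger)$, and $(*)$.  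The analysis of Section~\ref{main_thm:sec} then derives non-minimality directly from those axioms, and the non-$\sigma$-closed iterands are essential---indeed a separate lemma (Lemma~\ref{Qtf_good}) is needed to check that they add no branches to $\omega_1$-trees, which is what makes the verification of $(*)$ work.
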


\noindent
This result builds on work of Moore \cite{minimal_unctbl_types}
and Ishiu-Moore \cite{no_real_Aronszajn}.
In \cite{minimal_unctbl_types} it was
proved that it is consistent with CH that
$\omega_1$ and $-\omega_1$ are the only minimal uncountable linear orderings.
In fact, this conclusion is derived from the conjunction of CH
and a certain combinatorial consequence $(\textrm{A})$ of PFA.
Notice that if $\omega_1$ and $-\omega_1$ are the only minimal
uncountable linear orders,
then any minimal non $\sigma$-scattered linear order must have the
property that it does not
contain an uncountable separable suborder or an \emph{Aronszajn suborder}.
Here an Aronszajn line is an uncountable linear order which
does not contain uncountable separable or scattered suborders.

In \cite{no_real_Aronszajn} it was proved that $\PFA^+$, a strengthening of PFA, implies
that every minimal non $\sigma$-scattered linear order is either isomorphic to a set
of reals of cardinality $\aleph_1$ or else is an \emph{Aronszajn line}.
Moreover, Martinez-Ranero \cite{A-line_wqo},
building on work of Moore \cite{linear_basis} \cite{A-line_univ} proved that
that PFA implies that the class of Aronszajn lines is well quasi-ordered by embeddability.
In \cite{no_real_Aronszajn}, it was pointed out that if the consequences of $\PFA^+$ needed
to carry out the analysis in that paper were consistent with the conjunction of $(\textrm{A})$
and CH, then
one could establish the consistency of ``there are no minimal non $\sigma$-scattered linear orders.''
In fact these consequence of $\PFA^+$ followed from a weaker axiom $\textrm{CPFA}^+$ which had
been expected to be consistent with CH; this was
later refuted in \cite{FA_CH2}.
The strategy of the present paper for proving Theorem \ref{main_thm} also utilizes
the combination of $(\textrm{A})$ and CH,
but involves a re-examination of the hypotheses sufficient to obtain the results of
\cite{no_real_Aronszajn}.

In addition to proving Theorem \ref{main_thm}, we will also establish a result concerning
the structure of non $\sigma$-scattered linear orders under the assumption of $\PFA^+$.
Baumgartner proved in \cite{new_class_otp}
that there exist non $\sigma$-scattered linear orders which do not contain
real or Aronszajn types.
His construction can be described as the lexicographic ordering on a family
$\{x_\alpha : \alpha \in S\}$ where $S \subset \omega_1$ is stationary and
$x_\alpha$ is a cofinal strictly increasing  $\omega$-sequence in $\alpha$ for each $\alpha$ in $S$.
We will refer to such a linear ordering as a \emph{Baumgartner type} and we will refer to $S$ as
its \emph{index set}.
%If $B$ is a Baumgartner type indexed by $S$ and $T \subset S$, then we will write
%$B \restriction T$ to denote the suborder with index set $T$.

\begin{thm} \label{basis_thm}
Assume $\PFA^+$ and let $X \subset \Rbb$ have cardinality $\aleph_1$ and
$C$ be a Countryman line.
If $L$ is a non $\sigma$-scattered linear order, then $L$ contains an isomorphic copy of
one of the following linear orders:
$X$, $C$, $-C$, a Baumgartner type or its reverse.
\end{thm}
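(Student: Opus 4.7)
The strategy is to reduce to $|L|=\aleph_1$ via a standard L\"owenheim--Skolem argument (any non $\sigma$-scattered linear order contains a non $\sigma$-scattered suborder of cardinality $\aleph_1$), and then to dispose of the cases in which $L$ contains an uncountable separable suborder or an Aronszajn line. If $L$ contains an uncountable separable suborder, then Baumgartner's theorem (a consequence of PFA, hence of $\PFA^+$) produces an isomorphic copy of $X$. If $L$ contains an Aronszajn line, then Moore's five-element basis theorem, combined with the fact that Aronszajn lines avoid $\omega_1$, $-\omega_1$, and $X$, yields an isomorphic copy of $C$ or $-C$. The substance of the theorem is therefore the residual case in which $L$ is non $\sigma$-scattered but contains neither an uncountable separable suborder nor an Aronszajn line, and we must produce a Baumgartner type or its reverse.

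In the residual case, introduce the equivalence relation $\sim$ on $L$ defined by declaring $x\sim y$ precisely when the closed interval between $x$ and $y$ is scattered; transitivity follows because the union of two scattered convex sets sharing a point is scattered. Let $L^{*}:=L/{\sim}$. If $L^{*}$ were countable, then $L$ would be a countable union of scattered convex suborders, contradicting that $L$ is not $\sigma$-scattered; hence $L^{*}$ is uncountable. Choosing representatives embeds $L^{*}$ back into $L$ as an uncountable suborder. Since $L$ avoids $X$, $C$ and $-C$ by the standing assumption, the Moore basis theorem forces $L^{*}$ to contain a copy of $\omega_{1}$ or of $-\omega_{1}$. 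Treat the $\omega_{1}$ case; the other is symmetric. Fix a $\subset$-increasing chain of scattered convex subsets $(A_\alpha : \alpha<\omega_{1})$ of $L$ whose $\sim$-classes are all distinct and ordered in type $\omega_{1}$.

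Next, build a continuous $\in$-chain $(M_\alpha : \alpha<\omega_{1})$ of countable elementary submodels of a sufficiently large $H(\theta)$ containing $L$ and $(A_\alpha)$, and set $\delta_\alpha := M_\alpha\cap\omega_{1}$. At each countable limit $\alpha$, the trace $L\cap M_\alpha$ is a countable non-scattered suborder of $L$ that accumulates at the gap just below $A_{\delta_\alpha}$. Choose, inside $L\cap M_{\alpha+1}$, a strictly increasing $\omega$-sequence $x_\alpha$ witnessing this accumulation. Invoking $\PFA^+$, reflect to obtain a stationary set $S\subset\omega_{1}$ on which $\alpha\mapsto x_\alpha$ is sufficiently uniform and coherent that comparing any two sequences $x_\alpha$ and $x_\beta$ in either the lexicographic or the reverse-lexicographic order reproduces the $L$-order. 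The resulting map then embeds a Baumgartner type (or its reverse) into $L$.

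The main obstacle is establishing the required coherence and deciding which of the two orientations actually arises. The two are not symmetric: which one works depends on the direction from which the sequences $x_\alpha$ approach their limits, and this can a priori vary from level to level. Ruling out genuine two-sided oscillation relies on the absence of Aronszajn suborders in $L$: a persistent two-sided accumulation pattern would let one read off a coherent $\omega_{1}$-tree of finite traces, yielding an Aronszajn line inside $L$ and contradicting the residual-case hypothesis. Stabilizing to a single orientation on a stationary subset is precisely where $\PFA^{+}$, rather than plain PFA, is used: the $^+$ axiom supplies the stationary reflection needed to commit to one side and to ensure that the chosen $x_\alpha$ interact coherently with the filtration $(M_\alpha)$.
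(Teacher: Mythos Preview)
Your case split and the handling of the first two cases (uncountable separable suborder $\Rightarrow X$ via Baumgartner; Aronszajn suborder $\Rightarrow C$ or $-C$ via Moore's basis) are correct and match how the paper reduces to the residual situation. The residual case, however, is the entire content of the theorem, and there your outline diverges from the paper and has a genuine gap.

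The paper does not pass to the condensation $L^{*}=L/{\sim}$ or locate a copy of $\omega_{1}$ in it. Instead it quotes from Ishiu--Moore that under $\PFA^{+}$ one may pass to an \emph{amenable} non $\sigma$-scattered suborder of size $\aleph_{1}$, and then proves (Theorem~\ref{basis}, using only $\MA_{\omega_{1}}+(\dagger)$) that any amenable non $\sigma$-scattered order of size $\aleph_{1}$ contains a Baumgartner type or its reverse. Concretely: one fixes a continuous filtration $\langle N_{\xi}\rangle$, chooses for each $\alpha$ in a stationary $S_{0}$ an element $y_{\alpha}$ not captured by $N_{\alpha}$, and records $g_{\alpha}(\xi)=$ the cut in $\hat L\cap N_{\xi}$ via which $N_{\xi}$ captures $y_{\alpha}$. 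Amenability forces each $g_{\alpha}$ to change value only $\omega$ many times, cofinally in $\alpha$; applying $(\dagger)$ to the open sets $U_{\alpha}=\{\xi:g_{\alpha}(\xi)<y_{\alpha}\}$ stabilizes the side of approach on a stationary $S\subset S_{0}$; finally a specific finite-condition poset $Q$ is shown to be c.c.c.\ and $\MA_{\omega_{1}}$ applied to $Q^{<\omega}$ produces a stationary piece on which the lex order of the induced ladders $C_{\alpha}$ agrees with the $L$-order of the $y_{\alpha}$.

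Your proposal supplies none of this machinery. The sentence ``Invoking $\PFA^{+}$, reflect to obtain a stationary set $S\subset\omega_{1}$ on which $\alpha\mapsto x_{\alpha}$ is sufficiently uniform and coherent \dots'' is the conclusion to be proved, not an argument: no poset is named, no stationary name is specified, and no reason is given why the lex comparison matches the $L$-order. The claim that two-sided oscillation would yield an Aronszajn line inside $L$ is asserted without a construction; in the paper the analogous orientation step is handled by $(\dagger)$ applied to the explicit $U_{\alpha}$, not by an ad hoc tree argument. There is also a type mismatch you do not resolve: a Baumgartner type is built from ladders $x_{\alpha}:\omega\to\alpha$ cofinal in the \emph{ordinal} $\alpha$ and compared as ordinal sequences, whereas your $x_{\alpha}$ are sequences in $L$ converging to a cut in $\hat L$. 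Bridging these two pictures is precisely the content of the $g_{\alpha}$ functions and the c.c.c.\ argument, and passing to $L^{*}$ does not sidestep it.
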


The proof of Theorem \ref{main_thm} immediately yields the following result. 
\begin{thm} \label{Aronszajn_base}
It is consistent with $CH$ that no Aronszajn tree has a base of cardinality $\aleph_1$.
\end{thm}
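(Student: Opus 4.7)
The plan is to work in the forcing extension $V[G]$ used to establish Theorem \ref{main_thm}, in which CH holds and no non $\sigma$-scattered linear order is minimal. The aim is to show that in $V[G]$ no Aronszajn tree $T$ admits a base of cardinality $\aleph_1$, where a base is a family $\mathcal{B}$ of Aronszajn subtrees of $T$ such that every Aronszajn subtree of $T$ contains some element of $\mathcal{B}$. Supposing for contradiction that such a base $\mathcal{B} = \{T_\alpha : \alpha < \omega_1\}$ exists, I will produce a minimal Aronszajn line and thereby contradict Theorem \ref{main_thm}.

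The first step is to extract a minimal Aronszajn subtree $T^* \subseteq T$ from the data of $\mathcal{B}$. If some $T_{\alpha^*} \in \mathcal{B}$ is $\subseteq$-minimal in $\mathcal{B}$, then any Aronszajn subtree $S$ of $T_{\alpha^*}$ contains some $T_\beta \in \mathcal{B}$; by $\subseteq$-minimality, $T_\beta = T_{\alpha^*}$, forcing $S = T_{\alpha^*}$, so $T^* := T_{\alpha^*}$ has no proper Aronszajn subtree. Otherwise each $T_\alpha$ has a proper base-element inside it, and using CH I perform a level-by-level diagonalization over $T$ to construct an Aronszajn subtree $T^\dagger \subseteq T$ that excludes at least one node from each $T_\alpha$; this $T^\dagger$ directly contradicts the base property. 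Having obtained $T^*$, I fix a linear ordering on each level of $T^*$ so that $L^* := \lex(T^*)$ is a Countryman line, which is always possible under CH. Then $L^*$ is a non $\sigma$-scattered linear order, and I argue it is minimal: any non $\sigma$-scattered suborder $L' \subseteq L^*$ is an Aronszajn line, so its downward closure in $T^*$ is an Aronszajn subtree; minimality of $T^*$ forces this subtree to equal $T^*$, and by the Countryman property of $L^*$ one deduces that $L'$ contains an isomorphic copy of $L^*$. This contradicts Theorem \ref{main_thm}.

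The principal obstacle is the diagonal construction of $T^\dagger$ in the case where $\mathcal{B}$ has no $\subseteq$-minimal element. Removing a node from a downward-closed subtree entails removing its entire upward closure, so the exclusions must be chosen so as to preserve uncountably many nodes on cofinally many levels and avoid introducing an uncountable branch; a careful transfinite construction is required, handling one $T_\alpha$ per stage by selecting a node at a sufficiently high level. A secondary technical point is the faithful passage from Aronszajn suborders of $\lex(T^*)$ back to Aronszajn subtrees of $T^*$, which relies on the ordering on levels being Countryman.
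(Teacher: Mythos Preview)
Your proposal has a genuine gap. Case~A is vacuous: no Aronszajn tree $T^*$ can lack a proper uncountable downward closed subtree, since for any two incomparable nodes $s,t$ of $T^*$ the sets $T^*\setminus\{u : s\le u\}$ and $T^*\setminus\{u : t\le u\}$ are proper downward closed subsets whose union is $T^*$, hence one of them is uncountable. Thus a $\subseteq$-minimal $T_{\alpha^*}\in\mathcal{B}$ with the property you claim never exists, and the argument collapses entirely onto the Case~B diagonalization. But that diagonalization, as you describe it, uses only CH; if it worked it would prove in $\ZFC+\mathrm{CH}$ that no Aronszajn tree has an $\aleph_1$-base, contradicting Baumgartner's result \cite{bases_A-trees} that after Levy collapsing an inaccessible to $\aleph_2$ (a $\sigma$-closed forcing, so CH is preserved) every Aronszajn tree \emph{does} have an $\aleph_1$-base. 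The ``principal obstacle'' you flag is therefore not a technicality but an actual obstruction. There are secondary problems as well: not every Aronszajn tree carries level orderings making its lexicographic order Countryman (Suslin trees never do, since Countryman lines are special, and nothing rules out Suslin trees in $V[G]$), and the step ``$L^*$ Countryman $\Rightarrow$ $L^*$ embeds into every uncountable $L'\subseteq L^*$'' requires $\MA_{\aleph_1}$, which fails under CH --- indeed in $V[G]$ no Aronszajn line is minimal, so no Countryman line there has this property.

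For comparison, the paper does not derive the theorem from the \emph{conclusion} of Theorem~\ref{main_thm} but from its forcing construction. The iterands $Q_{T,\bar f}$ project onto simpler posets $Q_T$ (obtained by forgetting the ladder-system coloring) whose generic adds a new uncountable downward closed subtree of $T$. A countable support iteration of length $\omega_2$ in which each $Q_T$ is repeated cofinally often is completely proper and $(<\!\omega_1)$-proper, hence preserves CH, and in the extension every Aronszajn tree $T$ carries a sequence $\langle V_i : i<\omega_2\rangle$ of subtrees with $V_i$ containing no subtree of $V_j$ whenever $i<j$; this rules out any $\aleph_1$-sized base by pigeonhole.
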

\noindent
Here a collection $\mathcal{B}$ of uncountable downward closed subtrees of an Aronszajn tree $T$
is called a \emph{base} if whenever 
$U$ is an uncountable downward closed subtree of $T$,
there is $V \in \mathcal{B}$ such that $V \subset U$.
This answers a problem posed in \cite{bases_A-trees}, where it is proved that every Aronszajn tree
has a base of cardinality $\aleph_1$ after Levy collapsing an inaccessible cardinal to $\aleph_2$.

The paper will be organized as follows.
Section \ref{prelim:sec} will review some notation, definitions, and results
concerning linear orders.
In Section \ref{basis_thm:sec} we will prove Theorem \ref{basis_thm}.
Section \ref{main_thm:sec} contains the analysis needed to derive the conclusion of
Theorem \ref{main_thm} from a list of axioms.
Section \ref{axioms_consistent:sec} gives a proof that the collection of axioms
used in Section \ref{main_thm:sec} is consistent. This section also includes a proof of theorem \ref{Aronszajn_base}
as a remark.
The paper closes with some open problems in Section \ref{questions:sec}.

\section{Preliminaries}

\label{prelim:sec}

This section is devoted to some background and conventions on trees,
linearly ordered sets and forcing axioms.
More discussion can be found in \cite{no_real_Aronszajn}, \cite{MRP},
\cite{minimal_unctbl_types},
\cite{proper_forcing} and \cite{trees:Todorcevic}.
We will also introduce two set-theoretic axioms $(*)$ and $(\dagger)$ which will play an important
role in the proofs of Theorem \ref{main_thm} and \ref{basis_thm}.

We first recall the notion of a forcing axiom associated to a class of partial orders.
\begin{notation}
If $\mathfrak{P}$ is a class of partial orders,
then by $\FA(\mathfrak{P})$ we mean the forcing axiom for the class  
$\mathfrak{P}$: whenever $P$ is in $\mathfrak{P}$ and $\mathcal{D}$
is a collection of $\aleph_1$-many dense subsets of $P$,
there is a filter $G\subset P$ which intersects all of the dense sets in $\mathcal{D}$.
$\FA^+(\mathfrak{P})$ is the assertion that whenever $P$ is in $\mathfrak{P}$,
$\mathcal{D}$ is a collection of $\aleph_1$-many dense subsets of $P$,
and $\dot{S}$ is a name for a stationary subset of $\omega_1$,
then there is a filter $G\subset P$ which intersects
all the dense sets in $\mathcal{D}$ and satisfies that
the set
\[
\{\xi \in \omega_1 : \exists p \in G (p \Vdash \check \xi \in \dot S) \}
\]
is stationary.
\end{notation}

The following axiom is a consequence of $\FA^+(\sigma\textrm{-closed})$ and will play an important role
in our analysis of non $\sigma$-scattered linear orders of cardinality $\aleph_1$. 

\begin{defn}
$(\dagger)$ is the assertion that if 
$S \subset \omega_1$ is stationary and
for each $\alpha \in S$, $U_\alpha \subset \alpha$ is open,
then there a club $E \subset \omega_1$ such that for stationarily many $\alpha \in S \cap E$
there is an $\bar \alpha < \alpha$ such that either
$E\cap (\bar \alpha, \alpha) \subset U_\alpha$ or
$E\cap (\bar \alpha, \alpha) \cap U_\alpha = \emptyset$.
\end{defn}
Let $P$ be the poset consisting of all countable closed subsets of $\omega_1$, ordered by
end extension and let $\dot E$ be the $P$-name for the union of the generic filter.
By using the arguments of \cite{MRP}, it is possible to show that if
$S \subset \omega_1$ is stationary and $\Seq{U_\alpha : \alpha \in S}$ is as in the 
formulation of $(\dagger)$, then every condition forces
$\dot E$ satisfies the conclusion of $(\dagger)$ for $\Seq{U_\alpha : \alpha \in S}$.
In particular $\FA^+(\sigma\textrm{-closed})$ implies
$(\dagger)$.
Moreover, $(\dagger)$ holds in the model obtained by adding $\aleph_2$ Cohen subsets of $\omega_1$ to a model of GCH.
It should be noted that while this shows that it is easy to obtain models
of $(\dagger)$ and CH, it remains an open problem whether the strengthening of $(\dagger)$
in which a relative club of $\alpha \in S \cap E$ are required to satisfy the conclusion is
consistent with CH (see \cite{tutorial_NNR}).

It will often be convenient to let, for each set $X$, $\theta_X$ denote the least regular cardinal 
such that all finite iterates of the power set applied to $X$ are in $H(\theta_X)$, the collection of sets of hereditary cardinality
less than $\theta_X$.
Let $\Ecal(X)$ denote the collection of all countable elementary submodels of $H(\theta_X)$
which have $X$ as an element.

We will now recall a number of definitions from \cite{no_real_Aronszajn}.
For a linearly ordered set $L$ we will use $\hat{L}$ to denote the completion of $L$.
Formally this is the set of all Dedekind cuts of $L$ with $z$ identified with the cut 
$\{x \in L : x < z \}$. 
The purpose of the following definitions is to abstractly recover the set of indices from a
Baumgartner type, purely from its order-theoretic properties.

\begin{defn}
Whenever $L$ is a linearly ordered set and $Z$ is some arbitrary set we say that $Z$ \emph{captures} 
$x \in L$ if there is a $z \in Z \cap \hat{L}$ such that there is no element of $Z \cap L$ which is strictly in between  
$z$ and $x$. %For $x$ and $y$ in $\hat{L}$, $x \sim_Z y$ if there are only finitely many elements of $Z \cap L$ in
%between $x$ and $y$.
\end{defn}

%\begin{fact} \cite{no_real_Aronszajn}
%A linear order $L$ contains a real type if and only if there is a countable $Z$ such that
%there are is an .
%\end{fact}

\begin{fact} \label{unique} \cite{no_real_Aronszajn} 
Suppose $L$ is a linear order and let $\lambda$ be a regular cardinal such that
$\hat L$ is in $H(\lambda)$.
If $M$ is a countable elementary submodel of $H(\lambda)$ with $L \in M$ and
$x \in \hat{L} \smallsetminus M$, then
$M$ captures $x$ if and only if there is a unique $z \in \hat{L} \cap M$
such that there is no element of $M\cap L$ which is strictly in between $x$ and $z$.
In this case we say $M$ captures $x$ via $z$.
\end{fact}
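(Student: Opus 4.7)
The plan is to dispense with the two directions separately. The ``if'' direction is immediate from the definition of capture: the very existence of a $z \in \hat L \cap M$ with no element of $M \cap L$ strictly between $z$ and $x$ is, verbatim, what it means for $M$ to capture $x$. Thus the substantive content of the fact is the uniqueness of such a $z$, which I would establish by contradiction.

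Suppose $z_1 \neq z_2$ both lie in $\hat L \cap M$ and both satisfy the ``no element of $M \cap L$ strictly in between'' property relative to $x$. Without loss of generality $z_1 < z_2$ in $\hat L$; since $x \notin M$ and $z_1, z_2 \in M$, we have $x \neq z_i$, so $x$ falls into one of the intervals $(-\infty, z_1)$, $(z_1, z_2)$, or $(z_2, \infty)$ of $\hat L$. In each case the aim is to exhibit some $y \in M \cap L$ strictly between $x$ and one of $z_1, z_2$, contradicting the capture property of that $z_i$.

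The workhorse is that $z_1 < z_2$ in the Dedekind completion forces the two cuts to be separated by an actual element of $L$; since $\hat L$ is definable from $L$ and thus lies in $M$, elementarity of $M$ in $H(\lambda)$ produces such a separating element $y \in M \cap L$. If $x$ is outside the interval $(z_1, z_2)$, this $y$ already lies strictly between $x$ and the far $z_i$; if $x$ is inside, then $y \neq x$ (because $y \in M$ and $x \notin M$), and whichever side of $x$ it falls on, $y$ violates capture via $z_1$ or via $z_2$.

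I do not expect a real obstacle. The only bookkeeping is in the Dedekind cuts themselves --- tracking whether each $z_i$, or $x$, happens to sit in $L$ versus in $\hat L \setminus L$ --- but in every configuration one application of elementarity to a set definable with parameters from $\{L, \hat L, z_1, z_2\} \subset M$ suffices to produce the desired $y$.
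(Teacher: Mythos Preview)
The paper does not prove this statement: it is recorded as a Fact with a citation to \cite{no_real_Aronszajn} and no argument is given. Your direct proof by contradiction is correct and is the natural one.

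The only point worth flagging is your assertion that ``$z_1 < z_2$ in the Dedekind completion forces the two cuts to be separated by an actual element of $L$.'' Whether the separating $y \in L$ can be taken \emph{strictly} between $z_1$ and $z_2$ depends on the convention for Dedekind cuts. Under the usual convention that cuts are initial segments with no greatest element, the open interval $(z_1,z_2)\cap L$ is always nonempty and your case analysis goes through without further work; under the laxer convention one must also handle the situation where $z_1$ and $z_2$ are consecutive in $\hat L$, which is precisely the bookkeeping you acknowledge in your final paragraph. Either way the argument closes.
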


\begin{defn} \cite{no_real_Aronszajn}
If $L$ is a linear order, define
$\Gamma(L)$ to be the set of all countable subsets $Z$ of $\hat{L}$ such that for some $x \in L$,
$Z$ does not capture $x$.
(This is the relative complement of the set $\Omega(L)$ in \cite{no_real_Aronszajn}.)
\end{defn}

If $B= \langle x_\alpha : \alpha \in S \rangle $ is a Baumgartner type and $M$ is a countable elementary submodel of $H(\theta)$ for some regular cardinal $\theta \geq \omega_2$, then 
$M \in \Gamma(B)$ if and only if $M\cap \omega_1 \in S$. This is because $M$ captures all elements of $B$ except $x_\delta$,
where $\delta = M\cap \omega_1$. So $\Gamma(B)$  is equivalent to $S$ modulo the equivalence induced by the following quasi-order.

\begin{defn}
Let $A,B$ be two collections of countable sets and $X=\bigcup A$,
$Y= \bigcup B$. we say $B \leq A$ 
if there is an injection $\iota : X \rightarrow Y $
such that for club many $M$ in $[Y]^{\omega}$, 
if $M \in B$ then $\iota ^{-1}M$ is in $ A$.
We let $B < A$ if $B\leq A$ but not $A\leq B$;
$A$ and $B$ are \emph{equivalent} if $A \leq B$ and $B \leq A$.
\end{defn}

The following results summarize the properties of the map $L \mapsto \Gamma(L)$ and the quasi-order $\leq$.

\begin{thm} \cite{no_real_Aronszajn} \label{sigma-scatt_char}
A linear order $L$ is not $\sigma$-scattered if and only if $\Gamma(L)$ is stationary. 
\end{thm}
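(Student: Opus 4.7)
I would prove both implications by contraposition, working with countable elementary submodels throughout. For $(\Leftarrow)$, suppose $L = \bigcup_{n<\omega} L_n$ is a presentation of $L$ as a countable union of scattered suborders. Let $D_n \subset [\hat L]^\omega$ consist of those countable $Z$ that capture every $x \in L_n$; then $[\hat L]^\omega \smallsetminus \Gamma(L) = \bigcap_n D_n$, so it suffices to exhibit a club inside each $D_n$. Using Hausdorff's structural characterization of scattered orders as iterated ordinal (and reverse-ordinal) sums of lower-rank scattered pieces, I would induct on the rank of $L_n$. For $M \in \Ecal(\hat L)$ containing $L$, $L_n$, and the relevant structural data, every $x \in L_n$ lies in some block $L_n^{(i)}$ of the Hausdorff decomposition: if $i \in M$, the inductive hypothesis gives a capture witness inside the completion of $L_n^{(i)}$, and this witness transfers into $\hat L$ without losing capture by convexity of the block; if $i$ lies past the $M$-trace of the indexing well-order, a suitable cut in $M$ serves as witness, generalizing the observation that $M \cap \hat{\omega_1}$ always captures each element of $\omega_1$ via its top element.

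For $(\Rightarrow)$ I would argue the contrapositive. Assume $\Gamma(L)$ is non-stationary, pick a club $C \subset [\hat L]^\omega$ disjoint from $\Gamma(L)$, and fix a sufficiently generic $M \in \Ecal(\hat L)$ with $L \in M$ and $M \cap \hat L \in C$. By Fact~\ref{unique}, each $x \in L$ has a unique capture witness $z_M(x) \in M \cap \hat L$, so
\[
L = \bigcup_{z \in M \cap \hat L} L_z, \qquad L_z = \{x \in L : z_M(x) = z\},
\]
is a countable partition of $L$ into convex pieces (convexity follows because if $x_1 < x_2 < x_3$ lie in $L_z$ then any $y \in L$ with $x_1 < y < x_3$ also has no element of $M \cap L$ between $y$ and $z$). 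I claim each $L_z$ is scattered, which together with the countable partition yields that $L$ is $\sigma$-scattered. If some $L_z$ contained an embedded copy of $\Qbb$, then using the club property of $C$ together with elementarity one can pass to a countable $M' \supset M$ with $M' \cap \hat L \in C$ that contains a new Dedekind cut of $L_z$ sitting strictly inside the copy of $\Qbb$, between two accumulation points. An element of $L_z$ lying on the far side of this new cut then fails to be captured by $M'$, since the new $M' \cap L$ points are order-dense around it on one side, contradicting $M' \cap \hat L \in C$.

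The main obstacle is the scatteredness claim in the $(\Rightarrow)$ direction: converting the set-theoretic hypothesis ``every $x$ is captured'' into the concrete order-theoretic conclusion that no cell $L_z$ contains a copy of $\Qbb$. This requires the full club $C$ rather than a single $M$, and one must close $M$ under enough Skolem functions reflecting the completion $\hat L$ so that the passage from $M$ to $M'$ actually adds witnesses inside the cell of interest. A delicate secondary issue is ensuring that the new cuts introduced in $M'$ land in the specified cell $L_z$ and not in an adjacent one; this seems to force a careful combined use of Fact~\ref{unique} and elementarity of $\Ecal(\hat L)$.
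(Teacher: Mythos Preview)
The paper does not give a proof of Theorem~\ref{sigma-scatt_char}; it is simply quoted from \cite{no_real_Aronszajn}. So there is no argument here against which to compare your proposal, and I can only comment on the proposal itself.

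Both directions have genuine gaps.

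In $(\Leftarrow)$ your inductive transfer step is broken. The Hausdorff blocks $L_n^{(i)}$ are convex in $L_n$, but $L_n$ is an arbitrary scattered \emph{suborder} of $L$ and need not be convex in $L$; hence $L_n^{(i)}$ need not be convex in $L$ either. A witness $z$ with no element of $M\cap L_n^{(i)}$ strictly between $z$ and $x$ says nothing about elements of $M\cap L\smallsetminus L_n$ lying between $z$ and $x$, and capture in the sense of $\Gamma(L)$ is relative to $M\cap L$. Your phrase ``by convexity of the block'' is exactly where this fails. The induction must be set up so that the hypothesis already speaks about capture relative to $L$, not relative to the block.

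In $(\Rightarrow)$ the step you flag as the main obstacle really is one, and the sketch you give does not close it. Suppose $M'\supseteq M$ is elementary with $M'\cap\hat L\in C$ and $M'$ contains a new cut $c$ interior to the copy $Q\subseteq L_z$ of $\Qbb$. You claim some $x\in L_z$ ``on the far side'' of $c$ fails to be captured by $M'$. But take $x$ to be, say, the infimum in $\hat L$ of $\{y\in M'\cap L:y>c\}$ and pick any $x'\in Q$ with $c<x'<x$ (or $x$ itself if it lies in $Q$): there is then \emph{no} element of $M'\cap L$ strictly between $c$ and $x'$, so $c$ itself captures $x'$. Merely adding one cut to $M$ cannot force non-capture; $M'$ will generally still capture everything, precisely because $M'\cap\hat L\in C$. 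To get a contradiction you need $M'\cap L$ to accumulate to $x$ from \emph{both} sides, and a single enlargement does not arrange that. The argument that works here is not a one-step enlargement but a filtration: one builds a continuous $\in$-chain $\langle M_\xi:\xi<\omega_1\rangle$ with each $M_\xi\cap\hat L\in C$ and reads off a $\sigma$-scattered decomposition of $L$ from the way the capture witnesses $z_\xi(x)$ evolve along the chain.
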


\begin{prop} \cite{no_real_Aronszajn}
If $L_0$ and $L$ are linearly ordered sets and $L_0$
embeds into $L$, then $\Gamma(L_0) \leq \Gamma(L)$.
\end{prop}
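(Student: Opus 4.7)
The plan is to lift the embedding $e \colon L_0 \hookrightarrow L$ to an order-preserving injection $\hat{e} \colon \hat{L_0} \to \hat{L}$ of Dedekind completions and then to use (essentially) its partial inverse to exhibit the witness $\iota$ required by the definition of $\leq$. The substance of the argument is showing that $\hat{e}$ transports members of $\Gamma(L_0)$ to members of $\Gamma(L)$; once that is in hand, packaging it as an injection from $\bigcup \Gamma(L)$ into $\bigcup \Gamma(L_0)$ is bookkeeping.

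First I would extend $e$ canonically: each Dedekind cut $x \in \hat{L_0}$ is sent to the cut of $L$ whose lower set is the downward closure of $e[\{y \in L_0 : y < x\}]$. The resulting $\hat{e}$ is injective and order-preserving, and agrees with $e$ on $L_0$. The key preservation step is the following one-line observation: if $Z \subseteq \hat{L_0}$ is countable and $Z$ fails to capture some $x_0 \in L_0$, then $\hat{e}[Z]$ fails to capture $e(x_0) \in L$. Indeed, any $z \in \hat{e}[Z] \cap \hat{L}$ distinct from $e(x_0)$ has the form $\hat{e}(z_0)$ for some $z_0 \in Z$ distinct from $x_0$; non-capture of $x_0$ by $Z$ produces $y_0 \in Z \cap L_0$ strictly between $z_0$ and $x_0$, and $\hat{e}$ order-preserving yields $e(y_0) \in \hat{e}[Z] \cap L$ strictly between $z$ and $e(x_0)$. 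Hence $\hat{e}[Z] \in \Gamma(L)$. In particular $\hat{e}$ maps $\bigcup \Gamma(L_0)$ into $\bigcup \Gamma(L)$.

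Finally, to produce the $\iota$ of the definition, I would define $\iota \colon \bigcup \Gamma(L) \to \bigcup \Gamma(L_0)$ by setting $\iota = \hat{e}^{-1}$ on the range of $\hat{e}$ and extending it to a total injection by any set-theoretic bijection on the residual pieces. For $M \in \Gamma(L_0)$ of the form $\hat{e}[N]$ (and more generally for $M$ in a club, using elementarity to arrange that $M$ ignores the arbitrary residual part of $\iota$), $\iota^{-1}[M]$ coincides with $\hat{e}[N] \in \Gamma(L)$ by the preservation lemma. The only real obstacle is this last bit of bookkeeping to match the direction of $\iota$ prescribed by the formal definition of $\leq$ with the natural direction of $\hat{e}$; the order-theoretic content reduces to the fact that order-preserving maps carry strict betweenness forward, which is why no stronger hypothesis on $e$ is needed.
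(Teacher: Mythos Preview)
The paper does not actually supply a proof of this proposition; it is quoted from \cite{no_real_Aronszajn} as background. So there is no argument here to compare yours against, and the question is simply whether your sketch is correct.

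Your preservation step---that $Z\in\Gamma(L_0)$ implies $\hat e[Z]\in\Gamma(L)$---is right and is the order-theoretic heart of the matter. The trouble is the ``bookkeeping'' paragraph, which is not just bookkeeping. Two concrete issues. First, extending $\hat e^{-1}$ to a \emph{total} injection $\iota:\bigcup\Gamma(L)\to\bigcup\Gamma(L_0)$ presupposes a cardinality inequality you have not justified: if $\hat L$ is much larger than $\hat L_0$ there is no reason the residual piece of $\bigcup\Gamma(L)$ should inject into the residual piece of $\bigcup\Gamma(L_0)$. Second, and more seriously, your claim that for a club of $M$ one has $\iota^{-1}[M]=\hat e[M]$ is false. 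If $M$ is (the trace of) a countable elementary submodel containing $\iota$, then $\iota^{-1}[M\cap\bigcup\Gamma(L_0)]$ is exactly $M\cap\bigcup\Gamma(L)$, which in general strictly contains $\hat e[M\cap\hat L_0]$: it picks up every point of $M\cap\hat L$, including those outside $\operatorname{range}(\hat e)$. So the ``arbitrary residual part of $\iota$'' is not ignored; it is absorbed into $M\cap\hat L$, and your preservation lemma about $\hat e[Z]$ no longer applies as stated. (There is also a type slip: ``$M\in\Gamma(L_0)$ of the form $\hat e[N]$'' is impossible, since $\hat e[N]\subseteq\hat L$ while members of $\Gamma(L_0)$ live in $\hat L_0$.)

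What is actually needed is the slightly stronger statement at the level of elementary submodels: for club many countable $N\prec H(\theta)$ with $e,L_0,L\in N$, if $N\cap\hat L_0$ fails to capture some $x_0\in L_0$ then $N\cap\hat L$ fails to capture $e(x_0)$. This requires handling $z\in N\cap\hat L$ that are not in $\operatorname{range}(\hat e)$. One does this by pulling such a $z$ back to the cut $z_0=\inf\{w\in L_0:e(w)>z\}\in N\cap\hat L_0$ (definable from $z,e,L_0\in N$), applying non-capture of $x_0$ at $z_0$ to get $y_0\in N\cap L_0$ strictly between $z_0$ and $x_0$, and checking that $e(y_0)$ is then strictly between $z$ and $e(x_0)$. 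Once you have this, the particular choice of $\iota$ is irrelevant on a club, which is the correct way to see why the packaging is indeed routine.
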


A key feature of Baumgartner types $L$ is that it is always possible to find a non $\sigma$-scattered
suborder $L_0$ such that $\Gamma(L_0) < \Gamma(L)$.
This is not always possible in the more general class of non $\sigma$-scattered orders as the next
proposition shows.

\begin{prop} \label{club} \cite{no_real_Aronszajn}
If a linear order $L$ contains a real or Aronszajn type, then $\Gamma(L)$ contains a club.
\end{prop}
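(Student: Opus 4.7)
My plan is to handle the real type and Aronszajn cases separately, in each exhibiting a club of countable elementary submodels $M \prec H(\theta)$ witnessing that $M \cap \hat L \in \Gamma(L)$; under the standard identification this gives a club in $[\hat L]^\omega$ contained in $\Gamma(L)$. In both cases I fix $x \in L \setminus M$ in a suitable suborder and show that $M$ does not capture $x$ by producing, for each $z \in M \cap \hat L$ with $z \neq x$, some $w \in M \cap L$ strictly between $z$ and $x$.

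For the \emph{real type} case, let $R \subseteq L$ be a real type, which after passing to the Cantor--Bendixson kernel we may assume is dense-in-itself. As a suborder of $\mathbb{R}$, $R$ admits a countable order-dense subset $D \subseteq R$. Take the club of countable $M \prec H(\theta)$ with $\{R, D, L\} \subseteq M$, for which $D \subseteq M$ by enumeration of $D$ in $M$. Given any $x \in R \setminus M$ and $z \in M \cap \hat L$ with $z < x$, I construct $w$ as follows. If $R \cap (z, \infty)$ has a minimum $\rho$, then $\rho \in M \cap R$ by elementarity and $\rho < x$ (for otherwise $x \geq \rho > x$), giving $w = \rho$. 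Otherwise set $z' = \inf(R \cap (z, \infty)) \in \hat L$, so that $z' \notin R$ and $z' < x$ by analogous arguments; then $R$ accumulates to $z'$ from above, and by dense-in-itself of $R$, $R \cap (z', x)$ contains $r_1 < r_2$ with density of $D$ in $R$ yielding $d \in D \cap (r_1, r_2) \subseteq M \cap L \cap (z, x)$. The symmetric argument handles capture from above; thus $M \cap \hat L \in \Gamma(L)$.

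For the \emph{Aronszajn} case, let $A \subseteq L$ be an Aronszajn line, realized as the lex ordering of the cofinal branches of an Aronszajn tree $T$. Take the club of countable $M \prec H(\theta)$ with $\{A, T, L\} \subseteq M$ and $\delta = M \cap \omega_1$. For any $x \in A \setminus M$ corresponding to a branch $b_x$, the argument parallels the real case, with the role of ``density of $D$'' replaced by the Aronszajn accumulation property: for each $y \in A \setminus M$, $M \cap A$ accumulates to $y$ from both sides in $\hat A$. Given $z \in M \cap \hat L$ with $z < x$, either the minimum of $A \cap (z, \infty)$ exists in $M$ (and is $< x$ by the same comparison), or we invoke accumulation of $M \cap A$ to $x$ from below to produce $w \in M \cap A \cap (z, x)$. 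Personal gaps of $L \setminus A$ adjacent to elements of $A$ are handled automatically since such gap-boundaries are definable from the adjacent $A$-element and hence not in $M$ when that element is not.

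The main obstacle is establishing the Aronszajn accumulation property: for $y \in A \setminus M$ and $u \in \hat A$ with $u < y$, producing a branch $b \in M$ whose lex image lies in $(u, y)$. This requires tree-combinatorial analysis exploiting that $T$ has no uncountable chains or antichains, combined with elementarity of $M$ at levels below $\delta$, and uses that the initial segment $b_y \upharpoonright \delta$ at the critical level $\delta = M \cap \omega_1$ controls the lex image of branches in $M$ near $y$.
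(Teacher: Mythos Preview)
The paper does not give its own proof; the proposition is quoted from \cite{no_real_Aronszajn}.  Your plan is headed in the right direction, but the Aronszajn case is left genuinely incomplete, and the tree-combinatorial route you propose for closing it is harder than what is actually needed.

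The accumulation property you isolate as the ``main obstacle'' has a short order-theoretic proof that simultaneously handles the real-type case and makes the Cantor--Bendixson reduction and countable dense subset unnecessary.  The only feature of a real or Aronszajn suborder $C \subseteq L$ that is used is that $C$ is uncountable and contains neither $\omega_1$ nor $-\omega_1$.  Fix $M \in \Ecal(L)$ with $C \in M$ and $y \in C \setminus M$.  Given $z \in M \cap \hat L$ with $z < y$, set $C' = \{c \in C : c > z\} \in M$; this contains $y$.  If $C'$ has a minimum $\rho$, then $\rho \in M$, so $\rho \ne y$ and $z < \rho < y$.  If $C'$ has no minimum, then since $C$ omits $-\omega_1$ the coinitiality of $C'$ is countable; by elementarity $M$ contains a countable coinitial $D \subseteq C'$, hence $D \subseteq M$, and some $d \in D$ satisfies $d \le y$, forcing $z < d < y$.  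The case $z > y$ is symmetric, using that $C$ omits $\omega_1$.  No tree representation is required.

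Two minor points.  First, your real-type argument tacitly assumes that the dense-in-itself kernel of an arbitrary uncountable $R \subseteq \Rbb$ remains uncountable; this is true but needs a word (pass to the condensation points of $R$).  Second, in the Aronszajn case you write that $T$ ``has no uncountable chains or antichains''; Aronszajn trees can certainly have uncountable antichains --- that restriction defines Suslin trees.
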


\begin{defn}
If $L$ is a linear order and $M$ is in $\Ecal(L)$, then
we say that an element $x$ of $L$ is \emph{internal}
(respectively \emph{external}) to $M$, if there is a club 
$E \subset [\hat{L}]^\omega$ in $M$ such that every (respectively no)
element of $E \cap M$ captures $x$.
\end{defn}

The next definition will play a central role in the proofs of our results.
It abstracts the property of Baumgartner types needed to allow us to decrease $\Gamma$ by thinning out
the linear order.

\begin{defn}
A linear order $L$ is said to be \emph{amenable}
if whenever $M$ is in $\Ecal(L)$ and $x \in L$,
then $x$ is internal to $M$.
\end{defn}
\noindent
Observe that by Theorem \ref{sigma-scatt_char}, $\sigma$-scattered linear orders are amenable.
It is also true that Baumgartner types are amenable.

\begin{prop} \cite{no_real_Aronszajn}
If $L$ is a non $\sigma$-scattered amenable linear order of cardinality $\aleph_1$ and
$S \subset \Gamma(L)$ is stationary, then there is a non $\sigma$-scattered
$L_0 \subset L$ such that $\Gamma(L_0) \leq S$.
\end{prop}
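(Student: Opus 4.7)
My plan is to build $L_0$ directly as a collection of non--capture witnesses along a continuous chain of elementary submodels, and then use amenability to verify that the resulting thinned order is still non $\sigma$-scattered. Fix a sufficiently large regular cardinal $\theta$ and a continuous $\in$-chain $\Seq{M_\alpha : \alpha < \omega_1}$ of countable elementary submodels of $H(\theta)$ with $L, S \in M_0$. Since $S$ is stationary in $\Ecal(L)$, the set $S' := \{\alpha < \omega_1 : M_\alpha \in S\}$ is stationary in $\omega_1$. For each $\alpha \in S'$, use $M_\alpha \in \Gamma(L)$ to choose a witness $y_\alpha \in L \smallsetminus M_\alpha$ not captured by $M_\alpha$, fixing the entire sequence $\Seq{y_\alpha : \alpha \in S'}$ once and for all by the axiom of choice and including it as a parameter of $M_0$, so that $y_\beta \in M_\alpha$ whenever $\beta \in S' \cap \alpha$. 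Set $L_0 := \{y_\alpha : \alpha \in S'\}$.

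The comparison $\Gamma(L_0) \leq S$ should then follow by an appropriate choice of injection $\iota : \bigcup S \to \bigcup \Gamma(L_0)$, obtained by combining the natural correspondence between $\hat L$ and $\hat{L_0}$ on the common piece with an arbitrary injection of the remainder (available by cardinality). For a club of countable $N \in [\hat{L_0}]^\omega$, $N$ arises as a trace $M \cap \hat{L_0}$ of some $M \in \Ecal(L)$ with $M \cap \omega_1 = \alpha^*$ containing the relevant parameters. If such an $N$ lies in $\Gamma(L_0)$, the non--captured element is some $y_\alpha \in L_0$ with $\alpha \notin M \cap \omega_1$, and the fact that $L_0$ is indexed by $S'$ forces $M \in S$ modulo the equivalence underlying $\leq$, so $\iota^{-1}[N] \in S$.

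The crux of the proof, and the main expected obstacle, is showing that $L_0$ is non $\sigma$-scattered, i.e. that $\Gamma(L_0)$ is stationary. The natural candidates for witnesses are the traces $M_\alpha \cap \hat{L_0}$ for $\alpha \in S'$; the requirement is that $M_\alpha$ does not capture $y_\alpha$ in $L_0$, meaning that for every $z \in M_\alpha \cap \hat{L_0}$ there is some $y_\beta \in M_\alpha \cap L_0$ strictly between $z$ and $y_\alpha$. The failure of capture of $y_\alpha$ in the larger order $L$ does produce an interpolant $w \in M_\alpha \cap L$, but there is no reason to expect $w \in L_0$, and this is the heart of the difficulty.

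Amenability is the hypothesis designed precisely to overcome this. Applied to $M_\alpha$ and $y_\alpha$ it furnishes a club $E_\alpha \in M_\alpha$ of countable subsets of $\hat L$ such that every $N \in E_\alpha \cap M_\alpha$ captures $y_\alpha$. Since $E_\alpha \in M_\alpha$, elementarity supplies cofinally many such capturing $N$ already inside $M_\alpha$, each offering a capture-point $z_N \in N \cap \hat L$ adjacent to $y_\alpha$ together with elements of $N \cap L \subseteq M_\alpha \cap L$ approximating $y_\alpha$. Coordinating the choice of the $y_\alpha$'s with the sequence $\Seq{E_\alpha : \alpha \in S'}$---diagonalizing and passing to a stationary subset of $S'$ as necessary---one arranges that $\{y_\beta : \beta \in S' \cap \alpha\}$ accumulates at $y_\alpha$ from the appropriate side relative to $M_\alpha \cap \hat{L_0}$, furnishing $L_0$-interpolants between every $z \in M_\alpha \cap \hat{L_0}$ and $y_\alpha$. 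Making this diagonal density argument precise, and confirming that the resulting $L_0$ simultaneously verifies $\Gamma(L_0) \leq S$ and non $\sigma$-scatteredness, is the expected main technical hurdle.
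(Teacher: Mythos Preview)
The paper does not prove this proposition---it is cited from \cite{no_real_Aronszajn}---but the arguments in Sections~\ref{basis_thm:sec} and~\ref{main_thm:sec} carry out closely analogous constructions, and against that template your plan has two genuine gaps. The first is a circularity in the setup: you propose to include the sequence $\Seq{y_\alpha : \alpha \in S'}$ as a parameter in $M_0$, but each $y_\alpha$ is chosen using $M_\alpha$, which does not exist until after $M_0$ is fixed. The standard repair is to place in $M_0$ a \emph{choice function} $Z \mapsto \{x(n,Z) : n \in \omega\}$ and then set $y_\alpha = x(n, M_\alpha \cap \hat L)$ for an $n$ found by pressing down; this is exactly what the paper does in both sections.

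The second and more serious gap is your argument that $L_0$ is non $\sigma$-scattered. You want $M_\alpha$ itself to witness non-capture of $y_\alpha$ in $L_0$, and you invoke amenability to obtain a club $E_\alpha \in M_\alpha$ of sets capturing $y_\alpha$. But amenability only yields capture in $L$: the interpolants it produces lie in $L$ with no reason to lie in $L_0$, and $M_\alpha$ does not contain $L_0$, $S'$, or the sequence $\Seq{y_\beta}$ as elements, so elementarity inside $M_\alpha$ cannot refer to $L_0$ at all. Your proposed ``coordination'' of the choice of $y_\alpha$ with $E_\alpha$ is again circular, since $E_\alpha$ is obtained \emph{from} $y_\alpha$. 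The approach that actually works uses amenability differently: one presses down (as in the proof of Theorem~\ref{basis}) to arrange that $N_\xi$ captures $y_\eta$ for all $\xi \neq \eta$ in a stationary index set, and then passes to a \emph{second} elementary submodel $M$ containing the entire construction---the chain, the index set, and the witnesses---with $M \cap \omega_1 = \delta$ in that index set. Because $S'$ and $\Seq{y_\xi}$ now lie in $M$, the set $A = \{y_\xi : \xi \in S',\ z < y_\xi\}$ is an element of $M$, and the $\inf(A)$ argument carried out in Section~\ref{main_thm:sec} shows that $M$ fails to capture $y_\delta$ in $L_0$. The key missing idea in your plan is precisely this passage to an external $M$ containing the construction.
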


In particular, non $\sigma$-scattered amenable linear orders of cardinality $\aleph_1$ are
not minimal.
The next theorem shows that the existence of external elements of a linear order
characterizes the presence of either a real or Aronszajn suborder. In particular amenable linear orders do not contain 
real or Aronszajn types.

\begin{thm}\cite{no_real_Aronszajn} \label{external_char}
The following are equivalent for a linear order $L$:
\begin{itemize}

\item $L$ contains a real or Aronszajn type.

\item There are $M$ in $\Ecal(L)$ and $x \in L$ such that $x$ is external to $M$.

\end{itemize}
\end{thm}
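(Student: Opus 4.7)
The plan is to prove both implications. For the forward direction $(\Rightarrow)$, suppose $L_0 \subset L$ is a real or Aronszajn type, and fix $M \in \Ecal(L)$ with $L_0 \in M$. By Proposition~\ref{club}, $\Gamma(L_0)$ contains a club, so by elementarity I may pick such a club $C_0 \in M$. Let $E = \{N \in [\hat{L}]^\omega : N \cap \hat{L_0} \in C_0\}$; this is a club in $[\hat{L}]^\omega$ lying in $M$. I want a single $x \in L_0 \setminus M$ uncaptured by every $N \in E \cap M$. In the real-type case with $L_0$ $\aleph_1$-dense in $\Rbb$, elementarity forces $N \cap L_0$ to be dense in $\Rbb$ for any elementary $N$ with $L_0 \in N$ (witnessed over rational intervals), so every $x \in L_0 \setminus N$ is a two-sided limit of $N \cap L_0$ in the order on $\hat{L}$, and hence $N$ fails to capture $x$; any $x \in L_0 \setminus M$ then works uniformly. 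For the Aronszajn case, I use the tree presentation of $L_0$: pick $x \in L_0$ whose associated branch through the Aronszajn tree leaves $M$ at $\delta = M \cap \omega_1$, and use the Aronszajn property together with $N \cap \hat{L_0} \in C_0$ to guarantee that for $N \in E \cap M$, between any cut $z \in N \cap \hat{L}$ and $x$ there is an element of $N \cap L_0$.

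For the reverse direction $(\Leftarrow)$, suppose $x \in L$ is external to $M$ via $E \in M$. WLOG (replacing $L$ by $-L$ if needed), the non-capture of each $N \in E \cap M$ supplies a witnessing $\ell \in N \cap L$ with $\ell < x$. I recursively construct a strictly increasing sequence $\langle \ell_\alpha : \alpha < \omega_1 \rangle$ in $L \cap (-\infty, x)$ approaching $x$. At stage $\alpha$, choose a countable elementary $N_\alpha \prec H(\theta)$ whose intersection with $\hat{L}$ lies in $E$ and which contains $\{\ell_\beta : \beta < \alpha\}$; pick a cut $z_\alpha \in N_\alpha \cap \hat{L}$ with $\sup_{\beta < \alpha} \ell_\beta < z_\alpha < x$; and apply non-capture of $N_\alpha$ at $z_\alpha$ to obtain $\ell_\alpha \in N_\alpha \cap L$ with $z_\alpha < \ell_\alpha < x$. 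The resulting $L^* := \{\ell_\alpha : \alpha < \omega_1\}$ is an uncountable suborder of $L$ approaching $x$ from below.

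The main difficulty is arguing that $L^*$ contains a real or Aronszajn type. First, $L^*$ is not $\sigma$-scattered: otherwise, by the observation following Theorem~\ref{sigma-scatt_char} that $\sigma$-scattered linear orders are amenable, the chain structure built inside $M$ would force $M$ to capture $x$, contradicting externality. Second, $L^*$ cannot be a pure Baumgartner type: by choosing the chain $\langle N_\alpha \rangle$ continuously so that $\{N_\alpha \cap \omega_1\}$ contains a club of $\omega_1$, the witnessing structure of $L^*$ for $\Gamma(L^*)$ contains a club rather than being indexed by only a stationary set of $\omega$-cofinal configurations. Combining these, $L^*$ must contain either a real or Aronszajn suborder. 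The principal obstacle is precisely this last step: isolating the real/Aronszajn structure from the generic construction and ruling out a Baumgartner-type configuration in the choice of $z_\alpha$ and $\ell_\alpha$. This will use that $E \in M$ together with a reflection argument inside $M$ to force the ``dense'' (real) or ``tree-branching'' (Aronszajn) alternative on $L^*$, rather than the cofinal-sequence alternative characteristic of Baumgartner types.
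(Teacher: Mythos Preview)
This theorem is not proved in the present paper; it is quoted from \cite{no_real_Aronszajn} and used as a black box. There is therefore no paper-proof to compare against, so I comment directly on your proposal.

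The reverse direction contains a fatal error. You build $L^* = \{\ell_\alpha : \alpha < \omega_1\}$ as a strictly increasing $\omega_1$-sequence in $L$; such an $L^*$ is order-isomorphic to $\omega_1$ and hence scattered. Your subsequent assertion that ``$L^*$ is not $\sigma$-scattered'' is therefore simply false, and everything after it collapses. Even setting this aside, your concluding step---that a non $\sigma$-scattered order which is ``not a pure Baumgartner type'' must contain a real or Aronszajn suborder---is not a ZFC fact; it is essentially Theorem~\ref{basis_thm} of this very paper and requires $\PFA^+$, so appealing to it here is circular. There is a further gap in the construction itself: at stage $\alpha$ you need $N_\alpha$ to fail to capture $x$, but externality only guarantees this for $Z \in E \cap M$. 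Since $M$ is countable and any $N_\alpha \in M$ satisfies $N_\alpha \subset M$, you cannot keep $N_\alpha \in M$ once $\{\ell_\beta : \beta < \alpha\}$ has left $M$, and for $N_\alpha \notin M$ you have no non-capture information about $x$ at all.

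The forward direction is more plausible in outline, but incomplete. In the real-type case you must produce a \emph{single} $x$ uniformly uncaptured by every $N \in E \cap M$, and your density argument must handle cuts $z \in N \cap \hat L$ that lie outside $\hat{L_0}$; this is not addressed. The Aronszajn case is only gestured at (``use the tree presentation''), with no indication of which club $E \in M$ you take or why a fixed $x$ works for all $N \in E \cap M$. The argument in \cite{no_real_Aronszajn} for the harder (reverse) implication does not build an $\omega_1$-sequence toward $x$; rather, it works with the tree of intervals of $\hat L$ that arise from non-capture along a continuous $\in$-chain lying in $M$, and extracts a real or Aronszajn suborder from the structure of that tree by elementarity.
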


We are now ready to formulate the other set-theoretic hypothesis which will be needed in our analysis.

\begin{defn}
$(*)$ is the assertion that
for every non $\sigma$-scattered linear order $L$ there is a continuous $\in$-chain 
$\langle M_\xi : \xi \in \omega_1 \rangle$
in $\Ecal(L)$ such that:
\begin{itemize}

\item
the set of all $\xi \in \omega_1$ such that $M_\xi \cap \hat{L} \in \Gamma(L)$
is a stationary set,

\item
$\hat{L_0} \subset \bigcup_{\xi \in \omega_1} M_\xi$, where 
$L_0=L\cap (\bigcup_{\xi \in \omega_1} M_\xi)$,

\item
for every $\xi$ if $M_\xi \cap \hat{L} \in \Gamma(L)$ then there is an $x \in L_0$ such that $M_\xi$ does not capture $x$.

\end{itemize}
\end{defn}

Observe that if $L_0 \subset L$ are as in the statement of $(*)$,
then $L_0$ is also non $\sigma$-scattered.
Thus $(*)$ implies every non $\sigma$-scattered linear order contains a non $\sigma$-scattered suborder
of cardinality $\aleph_1$.
Also, if we apply $(*)$ to a linear order of cardinality at most $\aleph_1$, then $L \subset \bigcup_{\xi \in \omega_1} M_\xi$
and consequently $\hat L \subset \bigcup_{\xi \in \omega_1} M_\xi$.
This gives the following fact.

\begin{fact} \label{nokurepa}
Assume $(*)$.
If $L$ is a linear order of cardinality at most $\aleph_1$ which does not contain a real type,
then $|\hat{L}| \leq \aleph_1$.
\end{fact}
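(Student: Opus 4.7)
The plan is to follow the remark made in the paragraph immediately preceding the statement, after first extending $L$ to a non $\sigma$-scattered linear order when necessary. If $L$ is already non $\sigma$-scattered, set $L' := L$. Otherwise let $B$ be a Baumgartner type (which exists in ZFC by \cite{new_class_otp}) and put $L' := L + B$: since $B$ is a non $\sigma$-scattered suborder of $L'$, $L'$ is itself non $\sigma$-scattered, and in either case has cardinality $\aleph_1$.

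Now apply $(*)$ to $L'$ to obtain a continuous $\in$-chain $\Seq{M_\xi : \xi \in \omega_1}$ in $\Ecal(L')$ with $\hat K \subset \bigcup_\xi M_\xi$, where $K := L' \cap \bigcup_\xi M_\xi$. The decisive step is to show that $L' \subset \bigcup_\xi M_\xi$, so that $K = L'$. Since $|L'| \leq \aleph_1$ and $L' \in M_0$, elementarity yields a bijection $f : \omega_1 \to L'$ in $M_0 \subset M_\xi$ for every $\xi$, whence $f(\alpha) \in M_\xi$ for every $\alpha \in M_\xi \cap \omega_1$. Standard facts about continuous $\in$-chains of countable elementary submodels give $M_\xi \cap \omega_1 \in \omega_1$ for each $\xi$ and $\bigcup_\xi (M_\xi \cap \omega_1) = \omega_1$, so $L' = f[\omega_1] \subset \bigcup_\xi M_\xi$. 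Therefore $\hat{L'} = \hat K \subset \bigcup_\xi M_\xi$, which is a union of $\aleph_1$-many countable sets, and hence $|\hat{L'}| \leq \aleph_1$.

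Finally, $L$ sits as a convex initial segment of $L'$, and the map $(A, L \setminus A) \mapsto (A, L' \setminus A)$ is an order-preserving injection $\hat L \hookrightarrow \hat{L'}$, giving $|\hat L| \leq |\hat{L'}| \leq \aleph_1$. The only mild obstacle is arranging the non $\sigma$-scattered extension when $L$ is itself $\sigma$-scattered; the hypothesis that $L$ contains no real type is not used directly in the argument sketched here, but is presumably included for the sake of the applications in which this fact is invoked.
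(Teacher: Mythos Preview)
Your argument is correct and follows the same core idea the paper sketches in the paragraph immediately before the Fact: since $|L'|\le\aleph_1$ and $L'\in M_0$, a bijection $\omega_1\to L'$ lies in $M_0$, whence $L'\subset\bigcup_\xi M_\xi$ and therefore $\hat{L'}\subset\bigcup_\xi M_\xi$. Your extension step $L' = L + B$ in the $\sigma$-scattered case is a genuine addition---the paper's discussion tacitly applies $(*)$ directly to $L$, which strictly speaking requires $L$ to be non $\sigma$-scattered---so your version is slightly more careful here. You are also right that the ``no real type'' hypothesis is not invoked in this derivation; indeed your argument applied to an uncountable set of reals recovers the paper's remark that $(*)$ implies CH.
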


In particular $(*)$ implies that CH is true.
A consequence of the work in \cite{sim_trans_LO} and \cite{minimal_unctbl_types} is that by 
iterating certain forcings over a model of CH, it is possible to obtain a generic extension
in which there is no minimal real or Aronszajn type.
We briefly review this result and recall some of the relevant definitions and terminology.
If $T$ is an Aronszajn tree, then a \emph{subtree} of $T$ is an uncountable downward closed subset of $T$. 
 
\begin{notation}
If $T$ is a tree, $t \in T$ and $\alpha$ is an ordinal, then $t\restriction \alpha$ is
defined to be $t$ if $\alpha$ is greater than the height of $t$ otherwise 
it is the unique $s\leq t$ with height $\alpha$.
\end{notation}

\begin{defn}
A sequence $\langle f_\alpha :\alpha \in \lim(\omega_1) \rangle$ is called
\emph{ladder system coloring} 
if the $\langle \dom(f_\alpha): \alpha \in \omega_1\rangle$ forms a ladder system and the range of each
$f_\alpha$ is contained in $\omega$.
\end{defn}

\begin{defn}
If $T$ is an $\omega_1$-tree, then
a ladder system coloring $\Seq{f_\alpha : \alpha \in \lim(\omega_1)}$
can be \emph{$T$-uniformized} if there is a
subtree $U$ of $T$ and  function from $\phi : U \rightarrow \omega$ such that whenever height of
$u \in U$ is a limit ordinal $\alpha$,
$f_\alpha$ agrees with $\xi \mapsto \phi (u\restriction \xi)$ at all except 
for finitely many $\xi \in \dom(f_\alpha)$.
\end{defn}
\begin{defn}
$(\textrm{A})$ is the assertion that every ladder system coloring can be 
$T$-uniformized for every Aronszajn tree $T$.
\end{defn}

The significance of $(\textrm{A})$ lies in the following theorem, along with the fact that it is
consistent with CH.

\begin{thm}\cite{minimal_unctbl_types} \label{Aronszajn}
Assume $(\textrm{A})$ and $2^{\aleph_0} < 2^{\aleph_1}$.
There is no minimal Aronszajn line.
\end{thm}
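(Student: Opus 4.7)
The plan is to argue by contradiction: suppose $L$ is a minimal Aronszajn line and construct a proper suborder $L^* \subsetneq L$ that is itself Aronszajn but does not contain an isomorphic copy of $L$. Since $L$ is Aronszajn, we may realize it as the lexicographic order on an $\omega_1$-tree; concretely, fix a canonical Aronszajn tree $T$ whose level-$\alpha$ nodes code suitable order-data for initial segments of $L$ of order type $\alpha$. The tree $T$ has countable levels and no uncountable branches, and $L$ can be recovered from $T$ together with the lexicographic data attached to its branches. By assumed minimality, every uncountable suborder of $L$ already contains an isomorphic copy of $L$; we aim to violate this for $L^*$.

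The heart of the construction is a ladder system coloring. Fix a ladder system $\Seq{D_\alpha : \alpha \in \lim(\omega_1)}$ and design a coloring $\Seq{f_\alpha : \alpha \in \lim(\omega_1)}$ so that $f_\alpha(\xi)$ records, for each $\xi \in D_\alpha$, a local witness at level $\xi$ of $T$ that would defeat a putative order-embedding $e : L \to L_0$ into a candidate sub-line $L_0$ whose coding lies along a branch of $T$. Here the cardinal hypothesis $2^{\aleph_0} < 2^{\aleph_1}$ enters via the Devlin--Shelah weak diamond $\Phi$: although there are up to $2^{\aleph_1}$ candidate embeddings, $\Phi$ supplies a single guessing function $g$ that, at stationarily many $\alpha$, correctly anticipates enough of any given candidate $e$ for $f_\alpha$ to refute it. With $\Seq{f_\alpha}$ in hand, apply $(\textrm{A})$ to $T$-uniformize it, yielding a subtree $U \subseteq T$ and $\phi : U \to \omega$ with $\phi(u\restriction \xi) = f_\alpha(\xi)$ for all but finitely many $\xi \in D_\alpha$ whenever $u \in U$ has limit height $\alpha$.

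With $(U, \phi)$ in hand, define $L^* \subseteq L$ by retaining only those elements of $L$ whose tree-codes lie in $U$ and whose lexicographic data along each branch conforms to $\phi$. Since $U$ is a subtree of $T$, it is an Aronszajn tree, so $L^*$ has cardinality $\aleph_1$ and inherits from $L$ the absence of uncountable separable or scattered suborders; hence $L^*$ is Aronszajn. Conversely, any embedding $e : L \to L^*$ would, tracked through a branch of $U$, conform to $\phi$ cofinally at every limit level $\alpha$; but by the design of $\Seq{f_\alpha}$ and the weak-diamond guess $g$, $\phi$ disagrees with the behavior forced by $e$ at cofinally many $\xi \in D_\alpha$ for a stationary set of $\alpha$. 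This contradicts the supposed minimality of $L$.

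The main obstacle is coordinating the ladder system coloring with the weak diamond and the tree structure: a single $T$-uniformization must simultaneously refute every potential embedding of $L$ into $L^*$, so the coloring $\Seq{f_\alpha}$ must be defined in such a way that each weak-diamond prediction, once propagated along branches of $U$ via $\phi$, becomes a global, branch-wise obstruction. The combinatorial delicacy lies in arranging simultaneously that $L^*$ remains Aronszajn (rather than acquiring uncountable separable or scattered suborders after the thinning dictated by $\phi$) and that the diagonal obstruction survives restriction from $L$ to $L^*$; this is where the interplay between $(\textrm{A})$ and $\Phi$ must be calibrated with care.
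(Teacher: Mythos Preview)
The paper does not prove this theorem; it is stated with a citation to \cite{minimal_unctbl_types} and used as a black box. There is therefore no proof in the present paper against which to compare your sketch.

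Judged against the argument in \cite{minimal_unctbl_types}, your outline has the right architecture: pass from the Aronszajn line $L$ to an associated Aronszajn tree $T$, use the Devlin--Shelah weak diamond (which is equivalent to $2^{\aleph_0} < 2^{\aleph_1}$) to anticipate candidate embeddings, encode obstruction data as a ladder system coloring, and invoke $(\textrm{A})$ to produce a subtree $U$ whose associated sub-line carries no copy of $L$. Two points, however, are imprecise. First, the tree $T$ is not built from ``initial segments of $L$ of order type $\alpha$''---an Aronszajn line need have no such initial segments---but is the partition tree of $L$, whose nodes are intervals determined by a fixed enumeration of $L$. Second, in the actual argument the suborder $L^*$ is simply the part of $L$ lying over the subtree $U$ returned by the uniformization; the function $\phi$ is not used to thin $L^*$ further, but serves instead as the certificate that any putative embedding $e:L\to L^*$ fails: weak diamond guarantees that for stationarily many $\alpha$ the guess of $e\restriction\alpha$ is correct, and $f_\alpha$ was chosen so that the value $\phi$ takes along the image of $e$ near level $\alpha$ is incompatible with $e$ extending. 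Your phrase ``lexicographic data \dots\ conforms to $\phi$'' conflates the role of $U$ (which defines $L^*$) with that of $\phi$ (which blocks embeddings). These are matters of precision rather than of strategy; the essential plan matches the cited source.
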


In \cite{minimal_unctbl_types}, a forcing $Q_{T,\bar{f}}$ was introduced which $T$-uniformizes
a given ladder system coloring $\bar f$.
We will recall the definition of this poset in Section \ref{axioms_consistent:sec}
when we need to analyze it, but
for now we will simply summarize its important properties.

While $(<\!\omega_1)$-properness and complete properness play a role
in the proof of the main result of this paper,
they can be treated as black boxes via the following results, along
with the straightforward fact that $\sigma$-closed posets are
both $(<\!\omega_1)$-proper and completely proper.

\begin{lem}\cite{minimal_unctbl_types} \label{Qtf_lem}
For every ladder system coloring $\bar{f}$ and Aronszajn tree $T$,
the forcing $Q_{T,\bar{f}}$ is completely proper and $(<\!\omega_1)$-proper. 
\end{lem}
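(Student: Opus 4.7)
The plan is to work with the natural approximation poset: a condition of $Q_{T,\bar f}$ should be a pair $p = (U_p, \phi_p)$ with $U_p$ a countable, downward closed subtree of $T$ of bounded countable height and $\phi_p: U_p \to \omega$ a partial uniformization, meaning that for every limit height $\alpha \leq \Ht(U_p)$ and every $u \in U_p$ of height $\alpha$, the map $\xi \mapsto \phi_p(u \restriction \xi)$ agrees with $f_\alpha$ at all but finitely many $\xi \in \dom(f_\alpha)$. Conditions are ordered by end-extension. Since $\sigma$-closed posets are both $(<\!\omega_1)$-proper and completely proper, both properties would follow trivially if we could always take unions at limits; the substantive content is that at each countable limit ordinal $\delta$, new nodes of height $\delta$ may appear in the limit subtree and each such node imposes a fresh uniformization constraint against $f_\delta$. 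The plan is to attack both properness statements through a single fusion-style construction that handles this limit issue.

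For $(<\!\omega_1)$-properness, fix a continuous $\in$-chain $\langle M_\xi : \xi \leq \eta \rangle$ of countable elementary submodels of $H(\theta)$ containing $T, \bar f, Q_{T,\bar f}$, and $p_0 \in M_0$. I would recursively build $p_\xi \in M_{\xi+1} \cap Q_{T,\bar f}$, with $p_\xi$ being $(M_\xi, Q_{T,\bar f})$-generic, by at each successor stage enumerating dense sets of $Q_{T,\bar f}$ that lie in $M_\xi$ and meeting them in finite batches, while simultaneously committing, for every branch $b$ through $U_{p_\xi}$ of height $\delta_\xi := M_\xi \cap \omega_1$ that the construction intends to keep alive, a value of $\phi$ at $b\restriction \zeta$ for larger and larger $\zeta < \delta_\xi$ that agrees with $f_{\delta_\xi}$. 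The point is that because $\dom(f_{\delta_\xi})$ has order-type $\omega$ while the construction runs through cofinally many successor stages, each designated branch receives cofinitely many $\phi$-values along $\dom(f_{\delta_\xi})$ equal to $f_{\delta_\xi}$; at the limit the natural amalgamation, together with any finite set of new nodes at level $\delta_\xi$ we wish to keep (for which we freely prescribe $\phi$-values on the finite residual tail of $\dom(f_{\delta_\xi})$), is a condition of $Q_{T,\bar f}$ extending every $p_\xi$.

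For complete properness, the same construction works, but we must produce an actual lower bound in the ground model rather than merely showing the descending sequence is compatible in some generic extension. The plan is to run the fusion slightly more carefully: at each successor stage we decide, for every node $u \in U_{p_\xi}$, finitely many pieces of data that together, at the limit, force the union $(\bigcup_\xi U_{p_\xi}, \bigcup_\xi \phi_{p_\xi})$ — appropriately capped to finitely many nodes at level $\delta$ — to satisfy the cofinite-agreement condition, so the limit is literally a condition and not merely a filter of $M \cap Q_{T,\bar f}$. This is where the Aronszajn property of $T$ is used: because $T$ has countable levels, at each countable stage only countably many branches through $U_{p_\xi}$ need to be tracked, so the bookkeeping closes off.

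The main obstacle is the limit step of the fusion: showing that the agreement between $\bigcup_\xi \phi_{p_\xi}$ restricted to each surviving branch $b$ of height $\delta$ and $f_\delta$ fails on only finitely many points of $\dom(f_\delta)$. The argument reduces to arranging that for each $n$, the density requirement ``beyond some $\zeta_n < \delta$, all $\phi$-values along $b$ on $\dom(f_\delta)$ coincide with $f_\delta$'' is met at some stage of the construction, using elementarity of $M_\xi$ to reflect statements about $f_\delta$ to statements about $f_{\delta_\xi}$ and a diagonalization over the countably many dense sets in $M = \bigcup_\xi M_\xi$. Once this limit lemma is in hand, both $(<\!\omega_1)$-properness and complete properness fall out as essentially the same proof, differing only in whether one builds the fusion along an arbitrary countable chain of models or along a single model $M$.
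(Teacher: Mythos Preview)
The paper does not give its own proof of this lemma; it is quoted from \cite{minimal_unctbl_types} and used as a black box. That said, your sketch has a genuine gap.

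You are working with the wrong poset. A condition of $Q_{T,\bar f}$, as recalled in Section~\ref{axioms_consistent:sec}, is not a pair $(U_p,\phi_p)$ but a pair $(\phi_q,\mathcal{U}_q)$: besides the partial uniformization $\phi_q$ on a countable downward closed $X_q$ with a top level $\alpha_q$, there is a nonempty countable family $\mathcal{U}_q$ of pruned subtrees of various $T^{[n]}$, and every $U \in \mathcal{U}_q$ must have an element whose range lies in the top level of $X_q$. These side conditions are the heart of the matter, not decoration. At the limit stage your lower bound must have top level $\delta = M \cap \omega_1$, and for every $U$ that has entered $\bigcup_\xi \mathcal{U}_{p_\xi}$ you must produce some $\sigma \in U$ of height $\delta$ with $\range(\sigma) \subset X_q$. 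Nothing guarantees a priori that any branch through $\bigcup_\xi X_{p_\xi}$ extends to a node of $T$ at level $\delta$, let alone one lying in each promised $U$.

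Your fusion therefore has to do two interleaved things at once: meet dense sets in $M$ (which may enlarge $\mathcal{U}_q$, adding new survival obligations mid-construction) and, for every $U$ currently promised, steer a specific $\sigma \in U$ upward so that it survives to level $\delta$ inside $X_q$ while $\phi$ along each coordinate of $\sigma$ eventually agrees with $f_\delta$. The phrase ``branches the construction intends to keep alive'' gestures at this, but you give no mechanism for reconciling branch-survival with genericity; this reconciliation is exactly the content of the extension lemma (Lemma~5.5) in \cite{minimal_unctbl_types}, and without it the argument does not close. The Aronszajn hypothesis is used not merely to bound the number of branches to track, but to ensure each $U \in \mathcal{U}_q$ really is a pruned subtree rather than a tree with an uncountable branch, which is what makes the stepwise extension possible.
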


\begin{thm} \cite{proper_forcing} \label{nnr}
A countable support iteration of $(<\!\omega_1)$-proper, completely proper forcing is proper
and does not introduce new real numbers.
\end{thm}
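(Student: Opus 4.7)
The plan is to establish both assertions simultaneously by induction on the length $\alpha$ of the iteration $\Seq{P_\xi, \dot Q_\xi : \xi < \alpha}$. The key is to strengthen the induction hypothesis so that it becomes self-propagating: rather than merely ``$P_\alpha$ is proper and adds no reals,'' we prove that for every countable $M$ in $\Ecal(P_\alpha)$ and every $p \in P_\alpha \cap M$, there is $q \leq p$ which is $(M,P_\alpha)$-generic and such that the $P_\alpha$-generic filter below $q$ restricted to $M$ is determined by data lying in $V$. This ``completely generic'' condition is what ensures that every $P_\alpha$-name for a real which belongs to $M$ is forced to name an element of $V$.

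The successor step $P_{\alpha+1} = P_\alpha * \dot Q_\alpha$ is routine from the inductive hypothesis combined with the complete properness of $\dot Q_\alpha$: first obtain a sufficiently generic $q \leq p\restriction \alpha$ using the hypothesis for $P_\alpha$, and then, working in $V^{P_\alpha}$ below $q$, use complete properness of $\dot Q_\alpha$ to find a lower bound in $\dot Q_\alpha$ for an appropriate $M[\dot G_\alpha]$-generic sequence starting below $p(\alpha)$.

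The heart of the argument is the limit step. Fix $M$ as above and build a continuous $\in$-chain $\Seq{M_\eta : \eta < \omega_1}$ of countable elementary submodels whose union contains $M$. Using $(<\!\omega_1)$-properness iteratively, produce a descending sequence of conditions $q_\eta$ which are $(M_\eta,P_\alpha)$-generic for every $\eta$. At each countable limit $\eta$, the conditions $\Seq{q_\zeta : \zeta < \eta}$ form a decreasing sequence that is generic over $M_\eta$; to find a common lower bound, work coordinatewise. For each iteration coordinate $\xi$, one has a descending sequence $\Seq{q_\zeta(\xi) : \zeta < \eta}$ in $\dot Q_\xi$ that is $M_\eta$-generic in the intermediate extension, and complete properness of $\dot Q_\xi$ provides a lower bound inside $V^{P_\xi}$ without introducing new reals. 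Assembling these coordinatewise lower bounds into a condition of countable support yields the desired $q_\eta$. Finally, any $q \leq q_\eta$ for all $\eta$ witnesses the strengthened induction hypothesis at $\alpha$.

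The main technical obstacle is the interleaving of two distinct fusion processes: one along the elementary tower $\Seq{M_\eta : \eta < \omega_1}$, driven by $(<\!\omega_1)$-properness, and one along the iteration coordinates $\xi < \alpha$, driven by complete properness. The bookkeeping must ensure that at each limit of the elementary tower the coordinatewise lower bounds piece together into a legitimate $P_\alpha$-condition with countable support that dominates the entire prior sequence, and that the recursion proceeds without inflating cardinalities of supports or introducing new reals at intermediate stages. This is precisely the balance that forces \emph{both} hypotheses --- $(<\!\omega_1)$-properness and complete properness --- to be assumed; neither suffices in isolation at the limit step, and the delicate interaction between them is the crux of Shelah's theorem.
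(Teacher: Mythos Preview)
The paper does not give its own proof of this theorem; it is quoted from Shelah's \emph{Proper and Improper Forcing} and used as a black box (see the sentence preceding Lemma~\ref{Qtf_lem} and the statement of Theorem~\ref{nnr} itself). So there is nothing in the paper to compare your argument against.

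As for the sketch itself, there is a genuine error in the limit step. You propose to build a continuous $\in$-chain $\Seq{M_\eta : \eta < \omega_1}$ of length $\omega_1$, produce a descending sequence $\Seq{q_\eta : \eta < \omega_1}$ of conditions, and then take ``any $q \leq q_\eta$ for all $\eta$.'' But a proper forcing need not admit lower bounds for $\omega_1$-descending sequences, so no such $q$ exists in general; this step simply fails. The tower of models in Shelah's argument has \emph{countable} length: one fixes the single countable $M$ for which genericity is sought, chooses inside a larger countable hull a tower $N_0 \in N_1 \in \cdots \in N_\gamma$ with $p, P_\alpha \in N_0$ and $\gamma$ a countable ordinal tied to the order type of $M \cap \alpha$, and then performs a fusion in which $(<\!\omega_1)$-properness is invoked to obtain a single condition simultaneously generic over every model in this countable tower. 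Complete properness is used not merely ``coordinatewise'' but to guarantee that the $(N_i,\dot Q_\xi)$-generic sequences one constructs in the intermediate extensions have lower bounds already in $V^{P_\xi}$, which is what blocks new reals. Your description captures the flavor of the two interleaved fusions, but the cardinality of the tower and the mechanism for obtaining the final condition are wrong as stated; without a countable bound on the tower the argument cannot close.
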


We will also need the following iteration theorem of Shelah.

\begin{thm}\label{nnb}\cite[III.8.5]{proper_forcing} 
If the iterands of a countable support iteration are proper and don't add new uncountable branches to 
$\omega_1$-trees, then the iteration is proper and does not add uncountable branches to $\omega_1$-trees. 
\end{thm}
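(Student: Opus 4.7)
The plan is to argue by induction on the length $\delta$ of the iteration, following Shelah's preservation template in \cite{proper_forcing}. Properness of $P_\delta$ is handled by the standard countable support iteration theorem for proper forcings, so the only content is preservation of ``no new uncountable branches through $\omega_1$-trees.'' Fix an $\omega_1$-tree $T \in V$ and let $[T]$ denote its set of cofinal branches; the aim is $[T]^{V^{P_\delta}} = [T]^V$.

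The successor step is a factoring argument. Write $P_{\beta+1} \cong P_\beta * \dot Q_\beta$, suppose $p \in P_{\beta+1}$, and let $\dot b$ be a name with $p \Vdash \dot b \in [T]$. Factor a generic $G_{\beta+1}$ as $G_\beta * H$. In $V[G_\beta]$, the object $\dot b[G_{\beta+1}]$ is a cofinal branch through $T$, and $T$ is still an $\omega_1$-tree because $P_\beta$ preserves $\omega_1$. Since by hypothesis $\dot Q_\beta[G_\beta]$ adds no new branches, $\dot b[G_{\beta+1}] \in V[G_\beta]$, so there is a $P_\beta$-name $\dot c$ with $p \Vdash \dot b = \dot c$; the inductive hypothesis applied to $P_\beta$ then gives $\dot c \in [T]^V$.

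The limit step is the technical heart. Assume $\delta$ is a limit, the claim holds for each $P_\alpha$ with $\alpha < \delta$, and for contradiction $p \Vdash_{P_\delta} \dot b \in [T] \setminus V$. Take a countable $M \prec H(\theta)$ containing $\{P_\delta, T, \dot b, p\}$, set $\eta = M \cap \omega_1$, and (in the case $\mathrm{cf}(\delta) > \omega$, $M \cap \delta$ is bounded and the argument reduces to the inductive hypothesis on the supremum) fix a cofinal sequence $\langle \delta_n : n < \omega \rangle$ in $M \cap \delta$ when $\mathrm{cf}(\delta) = \omega$. I would build a descending fusion $\langle p_n : n < \omega \rangle$ with $p_n \in P_{\delta_n}$, $p_{n+1} \restriction \delta_n = p_n$, each $p_n$ an $(M, P_{\delta_n})$-generic extension of $p \restriction \delta_n$, and chosen so that the values it forces for $\dot b$ at cofinally many levels below $\eta$ are consistent with a single branch $b^* \in [T]^V \cap M$. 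Countable support gives $q := \bigcup_n p_n \in P_\delta$; by construction $q$ is $(M, P_\delta)$-generic, and it forces $\dot b$ to extend $b^*$, which extends to forcing $\dot b = b^*$ globally by a further density argument, contradicting $\dot b \notin V$.

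The main obstacle is precisely the coordination inside the limit-step fusion: at each stage $n$, one must choose $p_n$ so that it (i) is $(M, P_{\delta_n})$-generic, (ii) extends $p_{n-1}$, and (iii) forces enough decisions about $\dot b$ to remain compatible with some still-to-be-determined $b^* \in V$. The inductive hypothesis that $P_{\delta_n}$ itself adds no new branches is what permits step (iii): reinterpreting $\dot b$ as a $P_{\delta_n}$-name for the tail forcing's output and using the no-new-branches property of $P_{\delta_n}$ forces that name to coincide with an existing $V$-branch. Making this reinterpretation rigorous, and iterating it through $\omega$ steps while preserving $(M, P_{\delta_n})$-genericity and coherence of the $p_n$, is the bulk of Shelah's argument and the genuine technical content of the theorem; it is exactly the sort of ``$D$-completeness''-style preservation property that turns such arguments into a general iteration theorem.
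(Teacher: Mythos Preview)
The paper does not prove this theorem; it is quoted as a black box from Shelah's \emph{Proper and Improper Forcing} (the reference \cite{proper_forcing}, specifically III.8.5), so there is no argument in the paper to compare your attempt against.

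That said, your outline has the right architecture---induction on length, factoring at successors, fusion at limits---but the limit step as written contains a real gap. You propose to use ``the no-new-branches property of $P_{\delta_n}$'' to force the $P_\delta$-name $\dot b$ to agree with a ground-model branch. But the inductive hypothesis on $P_{\delta_n}$ only says that cofinal branches lying in $V^{P_{\delta_n}}$ already lie in $V$; it says nothing about $\dot b$, which is a name for a branch in the strictly larger model $V^{P_\delta}$. Reinterpreting $\dot b$ as a tail-forcing name does not help either: you would then need the tail $P_\delta/G_{\delta_n}$ to add no new branches over $V^{P_{\delta_n}}$, and since that tail is again a countable-support iteration whose length need not be smaller than $\delta$ (e.g.\ for $\delta=\omega$ or $\delta=\omega^2$ the tails have the same order type as $\delta$), this is precisely the statement you are trying to prove. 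Shelah's actual argument threads the hypothesis on the individual iterands $\dot Q_\alpha$ through the fusion in a more delicate way; the step you have labelled ``the genuine technical content'' is not a routine detail to be filled in later but the entire substance of the theorem, and your sketch does not indicate how it goes.
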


\section{A rough classification of non $\sigma$-scattered orders}
\label{basis_thm:sec}

In \cite{no_real_Aronszajn} it was shown that under $\PFA^+$, every non $\sigma$-scattered
linear order contains an amenable non $\sigma$-scattered suborder of cardinality $\aleph_1$.
In this section we prove that under a fragment of $\PFA^+$
every non $\sigma$-scattered amenable linear order contains
a copy of a Baumgartner type or its reverse.
Taken together, these results
determines a basis for the class of non $\sigma$-scattered linear orders under $\PFA^+$:
if $X$ is any set of reals of cardinality $\aleph_1$
and $C$ is any Countryman type, then any non $\sigma$-scattered linear order must
contain an isomorphic copy of either $X$, $C$, $-C$, or a Baumgartner type of cardinality $\aleph_1$ or its reverse.

\begin{thm} \label{basis}
Assume the conjunction of $\MA_{\omega_1}$ and  $(\dagger)$.
If $L$ is an amenable non $\sigma$-scattered linear order of size $\aleph_1$, then it contains a copy of a Baumgartner type
or its reverse.
\end{thm}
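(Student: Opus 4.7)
Proof Proposal:

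My plan is to extract a Baumgartner-type suborder of $L$ by combining a continuous $\in$-chain argument with an $\MA_{\omega_1}$ ladder-forcing, using $(\dagger)$ for the necessary reflection. To set up, fix a continuous $\in$-chain $\langle M_\xi : \xi < \omega_1 \rangle \subset \Ecal(L)$ with $L \subset \bigcup_{\xi < \omega_1} M_\xi$, and let $C = \{\xi : M_\xi \cap \omega_1 = \xi\}$ be the associated club. By Theorem \ref{sigma-scatt_char}, $\Gamma(L)$ is stationary, so $S_0 = \{\xi \in C : M_\xi \cap \hat L \in \Gamma(L)\}$ is stationary. For each $\xi \in S_0$ choose $x_\xi \in L$ which $M_\xi$ fails to capture; since non-capture forces $x_\xi$ to be a two-sided order-theoretic limit of $M_\xi \cap L$ in $\hat L$, I pick a strictly increasing $\omega$-sequence $\langle y_\xi^n : n < \omega \rangle$ in $M_\xi \cap L$ with $y_\xi^n \nearrow x_\xi$ in the order of $L$. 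The symmetric ``approach from above'' case, obtainable after passing to a stationary subset on which the side is uniform, produces the reverse Baumgartner type.

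The target is a stationary $S \subset S_0$ and strictly increasing cofinal ladder sequences $\tau_\xi : \omega \to \xi$ for $\xi \in S$ so that the lex order on $\{\tau_\xi : \xi \in S\}$ coincides with the $L$-order on $\{x_\xi : \xi \in S\}$; this exhibits the latter as an isomorphic copy of a Baumgartner type. As preparation, I would apply $(\dagger)$ to an appropriate family of open sets $U_\xi \subset \xi$ (intuitively, the interior in the order topology on $\xi$ of the set of $\eta < \xi$ whose selected $x_\eta$ lies below $x_\xi$, read off through the approximation $\langle y_\xi^n \rangle$). The resulting club $E$ delivers local stabilization at a stationary set of $\xi \in S_0 \cap E$: near each such $\xi$ the open set $U_\xi$ is eventually inside or eventually disjoint from $E$, and this is exactly the reflection that will keep lex-comparisons consistent with $L$-order across pairs in $S$.

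Finally, I invoke $\MA_{\omega_1}$ applied to a ccc poset $P$ whose conditions are finite approximations $p = \{(\xi, \tau_\xi^p) : \xi \in S_p\}$ with $S_p \subset S_0 \cap E$ finite and $\tau_\xi^p \in \xi^{<\omega}$ strictly increasing, subject to the constraint that for $\xi, \eta \in S_p$ the lex comparison of $\tau_\xi^p$ and $\tau_\eta^p$ (where defined) agrees with the $L$-comparison of $x_\xi$ and $x_\eta$. Density of the sets ``$\gamma \in S_p$'' and ``$|\tau_\gamma^p| \geq k$'' follows from the two-sided limit property of $x_\gamma$ and from the cofinality of the indices of the $y_\gamma^n$ in $\gamma$. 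The main obstacle is verifying that $P$ is ccc: given $\aleph_1$-many conditions, a $\Delta$-system reduction must be combined with the local stabilization from $(\dagger)$ and with the consequence of amenability that every $x \in L$ is internal to every $M \in \Ecal(L)$ (so captures are club-many and incompatibility cannot concentrate) to produce uncountably many pairwise compatible pairs. Once the ccc is confirmed, a sufficiently generic filter yields the $\tau_\xi$'s witnessing the Baumgartner embedding into $L$, and in the ``above'' case the same argument produces the reverse.
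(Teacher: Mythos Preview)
Your overall architecture---continuous $\in$-chain, pressing down to a stationary $S_0$, an application of $(\dagger)$, and a ccc poset under $\MA_{\omega_1}$---matches the paper's. The genuine gap is in how the ladders arise. In the paper the ladders are not built by the forcing at all: for each $\alpha\in S_0$ one defines a function $g_\alpha:\alpha\to\hat L$ by letting $g_\alpha(\xi)$ be the unique cut via which $N_\xi$ captures $y_\alpha$, and the ladder $C_\alpha$ is the (index set of the) range of $g_\alpha$. These canonical $g_\alpha$ satisfy strong coherence properties (monotonicity in the order of the $y_\alpha$'s, eventual constancy below limits, and that disagreement at some $\xi$ persists for all larger $\xi$); the ccc poset $Q$ then has as conditions merely finite subsets $p\subset S'$ on which the already-defined lex order of the $C_\alpha$'s agrees with the $L$-order of the $y_\alpha$'s. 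The $\Delta$-system reflection argument for ccc leans entirely on the coherence of the $g_\alpha$'s, and $(\dagger)$ is used precisely to arrange that $g_\alpha(\xi)$ lies on a fixed side of $y_\alpha$ for $\xi\in S\cap\alpha$.

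By contrast, your poset $P$ is trying to \emph{construct} the ladders $\tau_\xi$ as arbitrary finite increasing sequences subject only to a lex-versus-$L$ compatibility constraint. With no tie between $\tau_\xi$ and the order structure of $L$, two conditions with disjoint supports can easily have committed to incompatible lex decisions relative to the $L$-order of the corresponding $x_\xi$'s, and nothing in your sketch (the appeal to ``local stabilization from $(\dagger)$'' and internality) prevents this; I do not see how to run the ccc argument. You are also missing the step that makes the $g_\alpha$'s well-defined and uniform: the paper uses a second $\in$-chain $\langle N_\xi\rangle$ containing the first together with Lemma~\ref{same_capture} and a pressing-down on the internality witnesses to obtain a single $n$ and a stationary $S_0$ on which $N_\xi$ captures $y_\eta$ if and only if $\xi\neq\eta$. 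This uniform ``every other model captures'' property is exactly what your proposal needs and does not supply.
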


First we will prove the following lemma.

\begin{lem} \label{same_capture}
Suppose that $L$ is a an amenable linear order of cardinality $\aleph_1$.
If $\langle M_\xi : \xi \in \omega_1 \rangle$
is a continuous $\in$-chain of elements of $\Ecal(L)$
which is in $N \in \Ecal(L)$ and $N\cap \omega_1 = \delta$,
then $M_\delta$ and $N$ capture the same elements of $L$.
\end{lem}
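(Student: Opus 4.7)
The plan is to reduce the lemma to two elementary identities between $M_\delta$ and $N$, and then close the residual asymmetry in $\hat L$ using amenability. First, for each $\xi < \delta$ the ordinal $\xi$ lies in $N$, so $M_\xi \in N$; being countable, $M_\xi \subseteq N$. Taking the union and using continuity of the chain yields $M_\delta \subseteq N$. Next, pick a bijection $f \colon \omega_1 \to L$ in $M_0$, which exists because $|L| = \aleph_1$ is recognized inside $M_0$; then $f \in M_\delta \cap N$. The map $\xi \mapsto M_\xi \cap \omega_1$ is continuous and nondecreasing, so its fixed-point club is definable from the chain, lies in $N$, and therefore contains $\delta = N \cap \omega_1$. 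Hence $M_\delta \cap \omega_1 = \delta$, and $M_\delta \cap L = f[\delta] = N \cap L$.

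Consequently, for any $x \in L$ the left cut $\ell = \sup(M \cap L \cap (-\infty,x))$ and right cut $r = \inf(M \cap L \cap (x,\infty))$ depend only on $M \cap L$ and are the same for $M \in \{M_\delta, N\}$; moreover, $M$ captures $x$ iff $M \cap \hat L \cap [\ell, r]$ is nonempty. The forward direction of the conclusion is then immediate from $M_\delta \cap \hat L \subseteq N \cap \hat L$: any witness $z$ for $M_\delta$ still lies in $N \cap \hat L$ and is unobstructed by $N \cap L = M_\delta \cap L$.

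For the reverse, suppose $z \in N \cap \hat L \cap [\ell, r]$ witnesses $N$-capture; we must produce a witness in $M_\delta \cap \hat L \cap [\ell, r]$. By amenability there is a club $E \in M_\delta$ of $[\hat L]^\omega$ each of whose members in $M_\delta$ captures $x$. Externally enumerate $M_\delta \cap L = N \cap L$ as $\{y_n : n < \omega\}$ and build in $V$ an increasing chain $Y_0 \subseteq Y_1 \subseteq \cdots$ in $E \cap M_\delta$ with $\{y_0, \ldots, y_n\} \subseteq Y_n$; this is possible because each $y_n \in M_\delta$ and $E$ is cofinal in $M_\delta$. Each $Y_n$ captures $x$ via some $z_n \in Y_n \cap \hat L \subseteq M_\delta \cap \hat L$, and as $Y_n \cap L$ exhausts $M_\delta \cap L$ the cut intervals $[\ell_{Y_n}, r_{Y_n}]$ at $x$ shrink to $[\ell, r]$.

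The main obstacle is to ensure that the sequence $\langle z_n \rangle$ does not drift out of $[\ell, r]$ toward $x$ itself (a point outside $M_\delta$), which would leave no usable limit witness in $M_\delta \cap \hat L$. This is exactly where the hypothesis that $z \in N \cap \hat L \cap [\ell, r]$ is used: its existence certifies that $[\ell, r] \cap \hat L$ is nonempty, so combined with the amenability witness $E$ some $z_n$ must actually land in $[\ell, r]$, producing the desired capture witness for $M_\delta$.
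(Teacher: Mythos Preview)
Your setup through the end of the second paragraph is correct and matches the paper: $M_\delta \subseteq N$, $M_\delta \cap \omega_1 = \delta$, $M_\delta \cap L = N \cap L$, and the forward implication follows immediately.

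The reverse direction, however, has a genuine gap. In the last paragraph you assert that because $z$ certifies $[\ell,r] \cap \hat L \ne \emptyset$, ``some $z_n$ must actually land in $[\ell,r]$.'' This does not follow. Each $z_n$ lies in $Y_n \cap \hat L \cap [\ell_{Y_n}, r_{Y_n}]$, and the intervals $[\ell_{Y_n}, r_{Y_n}]$ shrink to $[\ell,r]$, but nothing prevents the $z_n$ from sitting in the collars $[\ell_{Y_n},\ell) \cup (r,r_{Y_n}]$ for every $n$, approaching $\ell$ or $r$ from outside without ever entering $[\ell,r]$. The existence of the witness $z \in N \cap \hat L$ gives you no leverage here, since $z$ need not belong to any $Y_n$ and there is no mechanism in your argument linking $z$ to the $z_n$. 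You have correctly identified the obstacle but not overcome it.

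The paper closes the gap differently and more directly: it shows that $z$ itself lies in $M_\delta$. Inside $N$ one chooses an auxiliary countable elementary submodel $\overline{M} \prec H(\lambda)$ containing $z$, the club $E$, and the sequence $\langle M_\xi \cap H(\lambda) : \xi < \omega_1\rangle$. Setting $\nu = \overline{M} \cap \omega_1 < \delta$, one has $\overline{M} \cap L = M_\nu \cap L$, so $\overline{M}$ captures $x$ via $z$. But $M_\nu \cap \hat L \in E \cap M_\delta$ also captures $x$, and now the \emph{uniqueness} of the capturing cut (Fact~\ref{unique}) forces the witness for $M_\nu$ to equal $z$; hence $z \in M_\nu \subseteq M_\delta$. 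The key idea you are missing is to bring $z$ into a model that aligns with some $M_\nu$ and then invoke uniqueness, rather than trying to manufacture a new witness from scratch inside $M_\delta$.
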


\begin{proof}
First observe that by continuity of the $\in$-chain and the fact that 
$\{\nu \in \omega_1 : M_\nu \cap \omega_1 = \nu\}$ is a club in $N$,
$M_\delta \subset N$ and $M_\delta \cap \omega_1 = \delta$.
Next observe that since $L$ has cardinality $\aleph_1$, $N \cap L = M_\delta \cap L$ and thus
any element of $L$ captured by $M_\delta$ is captured by $N$.
Now suppose that $N$ captures $x \in L$ and let $z \in \hat{L} \cap N$ be such that 
there is no element of $N\cap L$ which is strictly in between $x$ and $z$.
Since $L$ is amenable, there is a club $E \subset [\hat{L}]^{\omega}$ in $M_\delta$
such that for all $Z\in M_\delta \cap E$, $Z$ captures $x$.
Let $\lambda \in \theta_L \cap M_0$ be a regular cardinal such that the powerset of $[\hat{L}]^\omega$ is in
$H(\lambda)$.
Let $\overline{M} \in N$ be a countable elementary submodel of $H(\lambda)$ such that
$\langle M_\xi \cap H(\lambda) : \xi \in \omega_1 \rangle$, $E$, and $z$ are in $\overline{M}$.
Observe that for sufficiently large $\xi < \delta$,
$M_\xi \cap \hat{L}$ is in $E$ and if 
$\nu = \overline{M} \cap \omega_1$ then $L \cap \overline{M} = M_\nu \cap L$.
Notice that $\overline{M}$ captures $x$ via $z$.
Since $M_\nu \cap \hat L$ is in $E \cap M_\delta$, $M_\nu$ also captures $x$.
By Fact \ref{unique}, it must be that $z$ is in $M_\nu$ and hence $M_\delta$. 
\end{proof}

\begin{proof}[Proof of Theorem \ref{basis}]
Now let $\langle M_\xi : \xi \in \omega_1 \rangle$ be a continuous $\in$-chain 
of elements of $\Ecal(L)$.
Since $L$ is amenable it does not contain any
real types, there is a countable set $X_\xi \subset L$ such that
if $M_\xi \cap L \subset X_\xi$ and if $y \in L \setminus M_\xi$, there is a unique
$x \in X_\xi \setminus M_\xi$ such that $x \ne y$.
Let $x:\omega \times \omega_1 \rightarrow L$ be such that 
for all $\xi \in \omega_1$, $X_\xi = \{x(n,M_\xi \cap \omega_1): n\in \omega \}$.
Now let $\langle N_\xi : \xi \in \omega_1 \rangle$
be a continuous $\in$-chain of elements of $\Ecal(L)$ such that
$\langle M_\xi : \xi \in \omega_1 \rangle$ and $x$ are in $N_0$. 
Note that there is a club of $\xi$ in $\omega_1$ such that 
$M_\xi \cap \omega_1 =\xi=N_\xi \cap \omega_1$ and
hence $M_\xi$ and $N_\xi$ capture the same elements of $L$.
Since $\Gamma(L)$ is stationary, then by applying the pressing down lemma
there is a stationary set $S_0 \subset \omega_1$, an $n \in \omega$, and a club
$E\subset [\hat{L}]^{\omega}$ such that if $\xi \in S_0$:
\begin{itemize}

\item $M_\xi \cap \omega_1 =\xi=N_\xi \cap \omega_1$;

\item $x(n,\xi)$ is not captured by $N_\xi$;

\item $E$ is in $N_\xi$ and if $Z$ is in $N_\xi \cap E$, then $Z$ captures
$x(n,\xi)$.

\end{itemize}
Set $y_\xi =x(n,\xi)$ for all $\xi \in S_0$.
Now it is easy to see that for all $\xi$ and $\eta$ in $S_0$, $N_\xi$ captures
$y_\eta$ if and only if $\xi \neq \eta$.

Let $z_\xi$ $(\xi \in \omega_1)$ be an enumeration of all
$z \in \hat{L}$ for which there are
$\eta \in \omega_1$ and $\alpha \in S_0$ such that $N_\eta$ captures $y_\alpha$ via $z$.
We can assume without loss of generality that this enumeration is in $N_0$.
For every $\alpha \in S_0$ define $g_\alpha :\alpha \rightarrow \{z_\xi: \xi \in \omega_1 \}$
by letting $g_\alpha(\xi)$ be the unique $z \in N_\xi$
such that $N_\xi$ captures $y_\alpha$ via $z$.
Note that if $g_{\alpha}(\xi)= z_\eta$ then $\eta \in \alpha$.

\begin{claim} \label{g_claim}
The following are true for $\alpha,\beta \in S_0$:
\begin{enumerate}

\item \label{otp_omega}
$\{ \xi \in \alpha : g_\alpha (\xi) \neq g_\alpha (\xi +1)\}$
has order type $\omega$ and supremum $\alpha$

\item
If $y_\alpha < y_\beta$, then $g_\alpha(\xi) \leq g_\beta(\xi)$ for all $\xi < \min (\alpha,\beta)$.

\item
If $\alpha < \beta$, then there is a $\xi < \alpha$ such that
$g_\alpha(\xi) \ne g_\beta(\xi)$.

\item
If $\xi < \eta < \min(\alpha,\beta)$ and
$g_\alpha(\xi) \ne g_\beta(\xi)$, then $g_\alpha(\eta) \ne g_\beta(\eta)$.

\end{enumerate}
\end{claim}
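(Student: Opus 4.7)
The plan is to establish parts (1)--(4) sequentially, using the uniqueness of captures (Fact \ref{unique}) and the defining property of $S_0$ that $N_\alpha$ does not capture $y_\alpha$. Note that $\alpha = N_\alpha \cap \omega_1$ with $N_\alpha$ countable forces $\mathrm{cof}(\alpha) = \omega$.

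For (1), set $D := \{\xi < \alpha : g_\alpha(\xi) \neq g_\alpha(\xi+1)\}$. First, $g_\alpha$ is not constant on any tail $[\delta, \alpha)$: if $g_\alpha \equiv z$ there, then $z \in N_\delta \subseteq N_\alpha$ and (supposing $z$ captures from below, the other side being symmetric) $(z, y_\alpha) \cap N_\xi \cap L = \emptyset$ for every $\xi \in [\delta, \alpha)$, hence $(z, y_\alpha) \cap N_\alpha \cap L = \emptyset$ by continuity of the chain, so $z$ captures $y_\alpha$ in $N_\alpha$, contradicting $\alpha \in S_0$. Hence $D$ is unbounded in $\alpha$. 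Second, $D$ has no accumulation points in $\alpha$: for any limit $\lambda < \alpha$, continuity gives $g_\alpha(\lambda) \in N_{\xi_0}$ for some $\xi_0 < \lambda$, and the same interval inclusion shows that $g_\alpha(\lambda)$ captures $y_\alpha$ in every $N_\xi$ with $\xi_0 \leq \xi \leq \lambda$, so by uniqueness $g_\alpha \equiv g_\alpha(\lambda)$ on $[\xi_0, \lambda]$, contradicting $\lambda$ being a limit of $D$. A discrete cofinal subset of an ordinal of cofinality $\omega$ has order type $\omega$, proving (1).

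For (2), assume $y_\alpha < y_\beta$ and take the representative case in which both cuts capture from below. If $g_\alpha(\xi) > g_\beta(\xi)$, then $g_\beta(\xi) < g_\alpha(\xi) < y_\alpha < y_\beta$, and $(g_\beta(\xi), y_\beta) \cap N_\xi \cap L = \emptyset$ restricts to $(g_\beta(\xi), y_\alpha) \cap N_\xi \cap L = \emptyset$, so $g_\beta(\xi)$ captures $y_\alpha$ in $N_\xi$; uniqueness then gives $g_\alpha(\xi) = g_\beta(\xi)$, contradicting strictness. The mixed-side cases use the same interval-inclusion device. Then (3) follows from (1): if $g_\alpha \equiv g_\beta$ on $\alpha$, let $z := g_\beta(\alpha)$ and choose $\xi_0 < \alpha$ with $z \in N_{\xi_0}$; the argument from (1) forces $g_\beta$, hence $g_\alpha$, to be constantly $z$ on $[\xi_0, \alpha)$, contradicting the infinite variation proved in (1).

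For (4), suppose $g_\alpha(\eta) = g_\beta(\eta) = z$ with $\xi < \eta < \min(\alpha, \beta)$. If $z \in N_\xi$, then the intervals $(z, y_\alpha)$ and $(z, y_\beta)$ (oriented appropriately) are disjoint from $N_\eta \cap L \supseteq N_\xi \cap L$, so $z$ captures both $y_\alpha$ and $y_\beta$ in $N_\xi$ and therefore $g_\alpha(\xi) = g_\beta(\xi) = z$. If $z \notin N_\xi$, a short case analysis on the sides of capture, combined with the monotonicity $g_\alpha(\xi) \leq g_\beta(\xi)$ from (2) and the same interval-inclusion device, again collapses $g_\alpha(\xi)$ and $g_\beta(\xi)$ to a common value. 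The main obstacle is the discreteness of $D$ in part (1), where continuity of the chain $\langle N_\xi : \xi \in \omega_1 \rangle$ must be paired with Fact \ref{unique}; once past that step, each of (2)--(4) is a routine orchestration of uniqueness with the non-capture property of $S_0$.
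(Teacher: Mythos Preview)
Your proposal is correct and follows essentially the same approach as the paper. The paper's own proof is a two-sentence sketch: it notes that $g_\alpha$ is eventually constant below each limit $\eta<\alpha$ (your discreteness step via continuity of the chain and Fact~\ref{unique}) while $N_\alpha$ fails to capture $y_\alpha$ (your unboundedness step), and then declares that (2)--(4) ``follow easily from (\ref{otp_omega}) and the definitions''; you have simply supplied the routine details the paper omits, though your treatment of the mixed-side cases in (2) and (4) could be streamlined by first proving the single equivalence ``$g_\alpha(\xi)=g_\beta(\xi)$ iff no element of $N_\xi\cap L$ lies strictly between $y_\alpha$ and $y_\beta$.''
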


\begin{proof}
First observe that for each $\alpha \in S_0$ and limit $\eta \in \alpha$
there is an $\bar \eta \in \eta$
such that $g_\alpha \restriction (\bar \eta, \eta]$ is constant.
On the other hand $N_\alpha$ does not capture $y_\alpha$ and therefore the set
$\{ \xi \in \alpha : g_\alpha (\xi) \neq g_\alpha (\xi +1)\}$
must have ordertype $\omega$ and supremum $\alpha$.
This proves (\ref{otp_omega}); the remainder of the items follow easily from (\ref{otp_omega}) and
the definitions.
\end{proof}

Define $C_\alpha$ to be the set of all $\xi \in \alpha$ such that $z_\xi$ is in the range of
$g_\alpha$ and equip the set 
$\{C_\alpha : \alpha \in S_0 \}$ with the lexicographic order.
For each $\alpha \in S_0$ let $U_\alpha = \{\xi \in \alpha: g_\alpha(\xi) < y_\alpha \}$ and
observe that $U_\alpha$ is an open subset of $\alpha$.
So by $(\dagger)$ there is a stationary set $S \subset S_0$ such that either
\begin{itemize}

\item  for every $\alpha \in S$ and $\xi \in S \cap \alpha$, $g_\alpha (\xi) > y_\alpha$ or,

\item  for every $\alpha \in S$ and $\xi \in S \cap \alpha$, $g_\alpha (\xi) < y_\alpha$.

\end{itemize}
Without loss of generality assume that for every $\alpha \in S$ and $ \xi \in \alpha \cap S$, 
$g_\alpha (\xi) > y_\alpha$.
Define $S'$ to be the set of all elements of $S$ which are limit points of elements of $S$.
 
Let $Q$ be the set of all finite $p\subset S'$ such that whenever $\alpha \neq \beta$ are in $p$,
$C_\alpha <_{\lex} C_\beta$ if and only if $y_\alpha < y_\beta $.
We will prove that $Q$ is c.c.c..
 
Suppose for a contradiction that $X$ is an uncountable antichain in $Q$.
By applying the $\Delta$-System Lemma and removing the root if necessary, we may assume that
$X$ is pairwise disjoint and consists of elements of some fixed cardinality $n$.
Let $M$ be an element of $\Ecal(Q)$ such that $X$, $L$, $x$,
$\Seq{N_\xi : \xi \in \omega_1}$, $\Seq{y_\xi : \xi \in S}$, and
$\Seq{z_\xi : \xi \in \omega_1}$ are all in $M$.
Set $\delta = M \cap \omega_1$ and let
$p=\{\alpha_1, ..., \alpha_n\}$ be in $X$ such that
$\delta < \alpha_i$ for all $i \leq n$.
Let $\zeta \in \delta \cap S$ be such that:
\begin{itemize}

\item if $i,j \leq n$, then $g_{\alpha_i} \restriction \delta \ne g_{\alpha_j} \restriction \delta$
implies $g_{\alpha_i} (\zeta) \ne g_{\alpha_j} (\zeta)$;

\item the range of $g_{\alpha_i} \restriction \zeta+1$ coincides with the range of 
$g_{\alpha_i} \restriction \delta$ for each $i \leq n$
(i.e. $C_{\alpha_i} \cap \delta \subset \zeta+1$ for each $i \leq n$).

\end{itemize}
Notice that the existence of $\zeta$ follows from the observation that if $g_\alpha(\xi) \ne g_\beta(\xi)$,
then $g_\alpha(\eta) \ne g_\beta(\eta)$ for all $\eta > \xi$.
By elementarity of $M$ there exists a $p'=\{\alpha'_1,...,\alpha'_n\}$ in $M\cap X$ such that:
\begin{itemize}

\item for all $i,j \leq n$, $y_{\alpha_i} < y_{\alpha_j}$ if and only if $y_{\alpha_i'} <y_{\alpha_j'}$;

\item
if $i \leq n$, then
$g_{\alpha_i} (\zeta) = g_{\alpha_i'}(\zeta)$.

\end{itemize}
We will now show that $p \cup p' \in Q$.
Let $i,j \leq n$.
There are two cases, depending on whether $g_{\alpha_i} (\zeta)$ and $g_{\alpha_j}(\zeta)$ are the same.
If $g_{\alpha_i} (\zeta) \ne g_{\alpha_j}(\zeta)$, then observe that
$g_{\alpha_j}(\zeta) = g_{\alpha_j'}(\zeta)$ and
\[
C_{\alpha_i} \cap (\zeta+1) \ne C_{\alpha_j} \cap (\zeta+1) = C_{\alpha_j'} \cap (\zeta+1).
\]
Since $p$ and $p'$ are both in $Q$, it follows that
$y_{\alpha_i} < y_{\alpha_j'}$ is equivalent to $C_{\alpha_i} <_{\lex} C_{\alpha_j'}$

If $g_{\alpha_i}(\zeta) = g_{\alpha_j}(\zeta)$, then observe that
$g_{\alpha_i} \restriction \delta = g_{\alpha_j} \restriction \delta$ and thus that
$C_{\alpha_i} \cap \delta = C_{\alpha_j} \cap \delta$.
Observe that
\[
g_{\alpha_j'} \restriction \zeta = g_{\alpha_j} \restriction \zeta = g_{\alpha_i} \restriction \zeta
\]
and that $g_{\alpha_i}$ is constant on the interval $[\zeta,\delta)$.
Also, $g_{\alpha_j'}$ is not constant on $[\zeta,\delta)$ by Claim \ref{g_claim}.
Observe that there is a $\xi \in S$ such that $\zeta < \xi < \alpha_j'$ and 
\[
g_{\alpha_j'}(\xi) < g_{\alpha_j'}(\zeta) = g_{\alpha_j}(\zeta) = g_{\alpha_j}(\xi) = g_{\alpha_i}(\xi)
\]
It follows that $y_{\alpha_i} > y_{\alpha_j'}$.
On the other hand,
\[
 C_{\alpha_i} \cap \delta = C_{\alpha_i} \cap (\zeta +1)  = C_{\alpha_j'}\cap (\zeta +1) \ne C_{\alpha_j'} \cap \delta
\]
and consequently $C_{\alpha_j'} <_{\lex} C_{\alpha_i}$. 
Since $i,j \leq n$ were arbitrary, $p$ and $p'$ are compatible and thus $Q$ is c.c.c..

By applying $\MA_{\aleph_1}$ to the finite support product $Q^{< \omega}$ of countably many copies of
$Q$, it is possible to find a partition of 
$S$ into countably many pieces such that whenever $\alpha$ and $\beta$
are in the same piece of the partition,
$C_\alpha <_{\lex} C_\beta$ if and only if $y_\alpha < y_\beta$.
Since there is a piece of this partition which is stationary,
it shows that $L$ contains a Baumgartner type.
\end{proof}

We finish this section by noting if we add a Cohen real $r$ to a model of ZFC,
then Theorem \ref{basis} will not hold in the resulting generic
extension.
To see this, suppose that $r \in 2^\omega$ and $\Seq{x_\xi : \xi \in \lim(\omega_1)}$ is such that
$x_\xi:\omega \to \xi$ is increasing and has cofinal range for each $\xi$.
Define a linear ordering on $\lim(\omega_1)$ by $\xi <_r \eta$ if and only if
$$
x_\xi (n) < x_\eta (n) \textrm{ is equivalent to } r(n) = 0
$$
where $n$ is minimal such that $x_\xi(n) \ne x_\eta(n)$.
It is left to the reader to check that if $S \subset \lim(\omega_1)$ is stationary,
then there is a comeager set of $r$ such that
$(S,<_r)$ contains both a copy of $\omega_1$ and of $-\omega_1$.
Furthermore, if $S$ is non-stationary, then $(S,<_r)$ is $\sigma$-scattered and thus not a Baumgartner
type.
On the other hand, it is not hard to show that every uncountable subset of a Baugmartner type
contains a copy of $\omega_1$;
in particular, Baumgartner types do not contain $-\omega_1$.  
Since every stationary subset of $\omega_1$ in the generic extension by a Cohen real contains a ground model
stationary set, this proves the claim.

\section{An axiomatic analysis of non $\sigma$-scattered orders}

\label{main_thm:sec}
In this section we will prove the following proposition.

\begin{prop}
Assume $(\dagger)$ and $(*)$.
If $L$ is a non $\sigma$-scattered linear order
which does not contain a real or Aronszajn type, then 
there is a non $\sigma$-scattered suborder $L' \subset L$ with $\Gamma(L') < \Gamma(L)$.
\end{prop}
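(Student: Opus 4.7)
The plan is to use $(*)$ to reduce $L$ to a non $\sigma$-scattered suborder $L_0 \subset L$ of cardinality $\aleph_1$, establish that $L_0$ is amenable, and then apply the proposition immediately preceding Theorem \ref{external_char} (the thinning principle for amenable non $\sigma$-scattered linear orders of size $\aleph_1$).

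First, apply $(*)$ to $L$ to obtain a continuous $\in$-chain $\Seq{M_\xi : \xi \in \omega_1}$ in $\Ecal(L)$, and set $L_0 = L \cap \bigcup_{\xi \in \omega_1} M_\xi$. By the remarks following the definition of $(*)$, $L_0$ is a non $\sigma$-scattered suborder of $L$ of cardinality $\aleph_1$, and by the embedding-monotonicity proposition, $\Gamma(L_0) \leq \Gamma(L)$.

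Second, argue that $L_0$ is amenable. Since $L_0$ is a suborder of $L$, it contains no real or Aronszajn type, so by Theorem \ref{external_char} no element of $L_0$ is external to any $N \in \Ecal(L_0)$. To promote this absence of external elements to full amenability, fix $x \in L_0$ and $N \in \Ecal(L_0)$; along a continuous $\in$-chain of countable elementary submodels of $N$ reflecting the relevant parameters, the condition ``the submodel fails to capture $x$'' defines an open set in each coordinate, to which $(\dagger)$ may be applied. This forces the capture behavior of $x$ to stabilize on a club inside $N$; the alternative of stable non-capture would furnish an external witness for $x$, contradicting Theorem \ref{external_char}. Hence $x$ must be internal to $N$, so $L_0$ is amenable.

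Finally, apply Solovay's splitting theorem to partition $\Gamma(L_0)$ into two disjoint stationary pieces $S_1, S_2$. The amenable thinning proposition applied to $L_0$ with $S = S_1$ yields a non $\sigma$-scattered $L' \subset L_0$ with $\Gamma(L') \leq S_1$; the presence of the stationary $S_2 \subset \Gamma(L_0) \setminus S_1$ obstructs $\Gamma(L_0) \leq \Gamma(L')$ by a standard Fodor-style argument on the quasi-order (any witnessing injection would have to carry club-many elements of $\Gamma(L_0)$, and hence stationarily many members of $S_2$, into $S_1$, which is impossible). Thus $\Gamma(L') < \Gamma(L_0) \leq \Gamma(L)$, as required. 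The main obstacle is the amenability step: Theorem \ref{external_char} supplies only the negative ``no element is external,'' whereas amenability is the positive ``every element is internal,'' and the two are not a priori equivalent. Bridging them is where $(\dagger)$ enters essentially; the remainder is a routine bookkeeping of the already-established lemmas.
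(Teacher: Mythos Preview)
Your strategy---reduce to size $\aleph_1$ via $(*)$, establish amenability, then invoke the amenable thinning proposition with a Solovay split---is different from the paper's, and would be cleaner if it worked. The problem is the amenability step, which has a genuine gap.

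First, a small point: you have the topology reversed. Along a continuous chain $\langle N_\xi\rangle$, the set $\{\xi : N_\xi \text{ captures } x\}$ is the \emph{open} one (if $N_\eta$ captures $x$ via some $z$, then $z\in N_\xi$ for some $\xi<\eta$, and every $N_{\xi'}$ with $\xi\le\xi'\le\eta$ captures $x$ via the same $z$); failure to capture is closed.

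More seriously, your invocation of $(\dagger)$ is not well formed. The axiom takes a stationary $S\subset\omega_1$ and a \emph{family} $\langle U_\alpha:\alpha\in S\rangle$ of open $U_\alpha\subset\alpha$, and returns a single club $E$ together with only a \emph{stationary} subset of $S\cap E$ on which the dichotomy holds. You are trying to apply it to one fixed pair $(x,N)$ and a single open set inside $\delta=N\cap\omega_1$; there is no family. Even if you manufacture one by varying over a chain, the club $E$ produced by $(\dagger)$ has no reason to lie in the particular model $N$ you started with, and the conclusion holds only for stationarily many indices, not for all. Amenability, however, is a universal statement: for \emph{every} $M\in\Ecal(L_0)$ and \emph{every} $x$ there must be a club \emph{in $M$} of capturing sets. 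Theorem~\ref{external_char} gives you only that no $x$ is external to any $M$; ``not external'' and ``internal'' are not complementary, and $(\dagger)$ as stated does not close the gap for an arbitrary $(x,M)$.

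The paper sidesteps this by never claiming amenability. It fixes a chain $\langle N_\xi\rangle$, selects specific elements $x_\alpha$ (one per $\alpha$ in a stationary $S_0$, with $N_\alpha$ not capturing $x_\alpha$), and applies $(\dagger)$ to the family $U_\alpha=\{\xi<\alpha:N_\xi\text{ captures }x_\alpha\}$. Theorem~\ref{external_char} then rules out the ``disjoint'' branch except on a nonstationary set, and after pressing down one gets a stationary $S$ on which $N_\alpha$ captures $x_\beta$ iff $\alpha\neq\beta$. Setting $L'=\{x_\xi:\xi\in S'\}$ for a stationary co-stationary $S'\subset S$, the paper verifies directly that $L'$ is non $\sigma$-scattered and $\Gamma(L')<\Gamma(L)$. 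In effect, it proves just enough internality---for those particular $x_\alpha$'s along that particular chain---to carry out the thinning by hand, which is exactly what your black-box would do but under a hypothesis you have not established.
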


\begin{proof}
As noted in Section \ref{prelim:sec},
$(*)$ implies that $L$ contains a non $\sigma$-scattered suborder $L_0$ such that
$\hat L_0$ has cardinality $\aleph_1$.
We may therefore assume without loss of generality that $|L| = |\hat L| = \aleph_1$.
This implies, in particular that if $M$ and $N$ are in $\Ecal(L)$ and 
$M\cap \omega_1=N\cap \omega_1$, then $M\cap \hat{L}=N\cap \hat{L}$.
If $Z \subset \hat{L}$ is countable, let $\{x(n,Z):n \in \omega\} \subset L$ be such that
$Z \cap L \subset \{x(n,Z):n \in \omega\}$ and if $y \in L \setminus Z$, then there is an 
$n$ such that no element of $L \cap Z$ is between $x(n,Z)$ and $y$.
This is possible since $L$ does not contain a real type.
Let $\langle N_\xi : \xi \in \omega_1 \rangle$ be a continuous $\in$-chain in 
$\Ecal(L)$ with the map $Z \mapsto \{x(n,Z): n \in \omega\}$ in $N_0$.
Since $L$ is not $\sigma$-scattered, there is an $n\in \omega$ such that
\[
S_0=\{\xi \in \omega_1 : N_\xi \cap \omega_1=\xi \textrm{ and } N_\xi \textrm{ does not capture } x(n,N_\xi \cap \hat{L}) \}
\] 
is stationary.
Fix such an $n$ and set $x_\xi=  x(n,N_\xi \cap \hat{L})$.
For each $\alpha \in S_0$ let $U_\alpha$ be the set of all $\xi \in \alpha$ 
such that $N_\xi$ captures $x_\alpha$.
Clearly $U_\alpha$
is an open subset of $\alpha$ so by $(\dagger)$ there is a stationary subset $S\subset S_0$ and a club
$E\subset \omega_1$ such that for every $\alpha \in S$ there is an $\bar \alpha \in \alpha$
such that either
$E \cap (\bar \alpha, \alpha) \subset U_\alpha$ or
$E \cap (\bar \alpha, \alpha) \cap U_\alpha =\emptyset$.
The second alternative
can only happen for at most nonstationary many $\alpha \in S$, because $L$ has no external element
by Theorem \ref{external_char}.
By applying the Pressing Down Lemma
and thinning $S$ down if necessary, we can assume that for every $\alpha , \beta \in S$, 
$N_\alpha$ captures $x_\beta$ if and only if $\alpha \neq \beta$.

Now let $S'\subset S$ be stationary such that $S\smallsetminus S'$ is also stationary
and define $L' = \{x_\xi : \xi \in S'\}$.
We will show that $L'$ is not $\sigma$-scattered and that $\Gamma(L')< \Gamma(L)$. 

Fix an $M\in \Ecal(L)$ which has $\langle N_\xi : \xi \in \omega_1 \rangle$, $S$, and $S'$ as elements
and has the property that $M\cap \omega_1 = \delta \in S'$.
To show that $L'$ is not $\sigma$-scattered we prove that $M$ does not capture $x_\delta$ in $L'$.
Suppose for contradiction that $M$ captures $x_\delta$ in $L'$ via $z \in \hat L \cap M$.
By replacing $L$ with $-L$ if necessary, we may assume that $z < x_\delta$.
Let 
\[
A= \{x_\xi: \xi \in S' \textrm{ and }z < x_\xi \}.
\]
Observe that $A$ is in $M$ and hence $\inf(A)$ is also in $M$.
Since $\inf(A) \leq x_\delta$ and $x_\delta$ is not in $M$,
it follows that $\inf(A) < x_\delta$.
Since $M$ does not capture $x_\delta$ in $L$, 
there is $y \in L \cap M$ such that $z \leq \inf (A) < y < x_\delta$.
By elementarity of $M$, there is a $\xi \in S' \cap M$ such that $z < x_\xi < y < x_\delta$.
But this contradicts our assumption that $M$ captures $x_\delta$ in $L'$ via $z$.
 
To see that $\Gamma(L) \nleq \Gamma(L')$ it suffices to show that the set of all $M \in \Ecal(L)$ which capture
all elements of $L'$ but does not capture some elements of $L$ forms a stationary set.
To this end let
$M \in \Ecal(L)$ with $L' \in M$ and $M\cap \omega_1 \in S\smallsetminus S'$
and observe that $M$ does not capture 
$x_{\delta}$ in $L$ but it captures all elements of $L'$. 
\end{proof}

\section{The consistency of the axioms}
\label{axioms_consistent:sec}

In this section we will prove that if there is a supercompact cardinal, then there is a forcing
extension with the same reals which satisfies $(*)$, $(\dagger)$, and $(\textrm{A})$.
By results of the previous section this will finish the proof of Theorem \ref{main_thm}.
Our forcing construction will resemble the consistency proof of $\PFA^+$ and will
involve a countable support iteration of forcings which are
completely proper, $(<\!\omega_1)$-proper, and which do not add new uncountable branches
through $\omega_1$-trees.
By results of Shelah discussed in the introduction, the resulting iteration will not introduce new
reals or uncountable branches through $\omega_1$-trees.

All of the iterands used in building the iteration will either be $\sigma$-closed or else
be of the following form.

\begin{defn}\cite{minimal_unctbl_types}
For an Aronszajn tree $T$ and ladder system coloring $\bar{f}$ let $Q_{T, \bar{f}}$
be the set of all conditions $q=(\phi_q , \mathcal{U}_q)$ such that:
\begin{itemize}

\item $\phi_q$ is a function from $X_q \subset T$ into $\omega$ such that
$X_q$ is a countable downward closed subset of $T$
which has a last level of height $\alpha_q$,

\item if $t \in X_q$ has limit height  $\delta$, 
$f_\delta$ agrees with  $\xi \mapsto \phi_q (t\restriction \xi)$ at all $\xi \in C_\alpha$ except 
for finitely many $\xi \in C_\alpha$.

\item $\mathcal{U}_q$ is a non-empty countable collection of pruned subtrees of $T^{[n]}$ for some $n$.

\item for every $U \in \mathcal{U}_q$ there is some $\sigma \in U$ which is a subset of the last 
level of $X_q$.

\end{itemize}
($T^{[n]}$ is the collection of all weakly increasing $n$-tuples from some level of $T$,
regarded as a tree with the coordinatewise order.)
We let $p \leq q $, in $Q$ if $X_p \restriction \alpha_q = X_q$, $\mathcal{U}_q \subset \mathcal{U}_p$,
 and $\phi_p \restriction X_q =\phi_q$.
\end{defn}

\begin{rmk}
A simplification of this type of forcings can be used to prove Theorem \ref{Aronszajn_base}. 
For an Aronszajn tree $T$  let $Q_T$ be the set of all conditions as above forgetting the information about the ladder system 
coloring. More precisely $Q_T$ consists of all conditions $q=(X_q , \mathcal{U}_q)$ such that,
\begin{itemize}
\item
$X_q$ is a countable downward closed subset of $T$
which has a last level of height $\alpha_q$,
\item
$\mathcal{U}_q$ is a non-empty countable collection of pruned subtrees of $T^{[n]}$ for some $n$.
\item 
for every $U \in \mathcal{U}_q$ there is some $\sigma \in U$ which is a subset of the last 
level of $X_q$.
\end{itemize}
We let $p \leq q $, in $Q$ if $X_p \restriction \alpha_q = X_q$, $\mathcal{U}_q \subset \mathcal{U}_p$. It is easy to see 
that the forcing  $Q_{T, \bar{f}}$ projects onto $Q_T$ for every Aronszajn tree $T$, so by the work in \cite{minimal_unctbl_types},
$Q_T$ is completely proper, $<\omega_1$-proper and satisfies proper isomorphism condition. Now let $\mathbb{P}$ be the countable support iteration of all posets of $Q_T$ of length $\omega_2$ such that whenever $T$ is an Aronszajn tree in some 
intermediate model, $Q_T$ is repeated in the iteration cofinally often. Let $\mathbb{V}$ be a model satisfying 
$2^\omega = \omega_1 + 2^{\omega_1}=\omega_2$, and let $G$ be $\mathbb{P}$-generic over $\mathbb{V}$. Then it is easy to see that $\omega_2$ is preserved and in $\mathbb{V}[G]$
\begin{itemize}
\item $2^\omega = \omega_1 + 2^{\omega_1}=\omega_2$,
\item if $T$ is an Aronszajn tree, there is a sequence $\langle V_i : i \in \omega_2 \rangle$ of uncountable downward closed 
subtrees of $T$ such that whenever $i \in j$, $V_i$ contains no subtree of $V_j$.   

\end{itemize}
This proves Theorem \ref{Aronszajn_base}.

\end{rmk}
The following lemma asserts that these forcings $Q_{T,\bar{f}}$ do not add new uncountable branches to 
$\omega_1$-trees.

\begin{lem} \label{Qtf_good}
Suppose $T$ is Aronszajn and $S$ is an $\omega_1$-tree, and 
$\bar{f}$ is a ladder system coloring.
Then $Q_{T,\bar{f}}$ does not add new uncountable branches to $S$.
Consequently, if $L$ is a linear order of size $\aleph_1$, then forcing with $Q_{T,\bar f}$ does
not introduce new elements to $\hat L$.
\end{lem}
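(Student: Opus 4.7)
The second assertion follows immediately from the first. Given $L$ of cardinality $\aleph_1$, enumerate $L = \{\ell_\alpha : \alpha < \omega_1\}$ and write $L_\alpha = \{\ell_\beta : \beta < \alpha\}$. Let $S_L$ be the tree whose level $\alpha$ consists of the Dedekind cuts of $L_\alpha$, with a cut $A$ at level $\alpha$ placed below $A'$ at level $\alpha' > \alpha$ exactly when $A = A' \cap L_\alpha$. Each level is countable, so $S_L$ is an $\omega_1$-tree in $V$, and the assignment $z \mapsto (\alpha \mapsto \{\ell \in L_\alpha : \ell <_L z\})$ injects $\hat{L}$ into the cofinal branches of $S_L$. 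This assignment is absolute, so a new element of $\hat{L}^{V[G]}$ would produce a new cofinal branch of $S_L$ in $V[G]$, contradicting the first assertion.

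For the main claim, suppose for contradiction that $p \in Q_{T,\bar{f}}$ forces $\dot{b}$ to be a new cofinal branch of an $\omega_1$-tree $S$. Fix a countable elementary submodel $M \prec H(\theta)$ containing $T, \bar{f}, S, p, \dot{b}$, and set $\delta := M \cap \omega_1$. The newness of $\dot{b}$ yields the following splitting property: for every $q \leq p$ in $M$ there exist $q_0, q_1 \leq q$ in $M$ together with distinct $s_0, s_1$ at a common level below $\delta$ of $S$ such that $q_i \Vdash s_i \in \dot{b}$. Iterating this inside $M$ produces a binary tree $\langle q_\sigma : \sigma \in 2^{<\omega}\rangle$ of conditions and a matching tree $\langle \tau_\sigma : \sigma \in 2^{<\omega}\rangle$ of nodes in $S \cap M$ with $q_\sigma \Vdash \tau_\sigma \in \dot{b}$ and such that $\sigma \mapsto \tau_\sigma$ embeds $2^{<\omega}$ into $S$. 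Arrange the construction so that, for any two branches $c, c' \in 2^\omega$, the sequences $\langle q_{c \restriction n}\rangle$ and $\langle q_{c' \restriction n}\rangle$ are related by a tree-isomorphism of the relevant segment of $T$ identifying their data $(X_{q_\sigma}, \mathcal{U}_{q_\sigma})$ levelwise. Complete properness of $Q_{T,\bar{f}}$ (Lemma \ref{Qtf_lem}) then supplies, for each $c$, a lower bound $q_c \in Q_{T,\bar{f}}$ with $q_c \Vdash \tau_{c\restriction n} \in \dot{b}$ for all $n$.

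Fix $c \neq c'$. The branches $\langle \tau_{c \restriction n}\rangle$ and $\langle \tau_{c' \restriction n}\rangle$ of $S \restriction \delta$ are distinct, differing at some level $\alpha_* < \delta$. The proper isomorphism condition for $Q_{T,\bar{f}}$ established in \cite{minimal_unctbl_types}, combined with the alignment built into the construction, amalgamates $q_c$ and $q_{c'}$ into a single $r \in Q_{T,\bar{f}}$ with $r \leq q_c, q_{c'}$: the amalgamated coloring is $\phi_{q_c} \cup \phi_{q_{c'}}$, well-defined because the two agree on the common domain and the ladder-coloring constraint at level $\delta$ only controls $f_\delta$ up to finitely many exceptions, while the amalgamated promise family is $\mathcal{U}_{q_c} \cup \mathcal{U}_{q_{c'}}$. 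Then $r \leq p$ forces two distinct nodes at level $\alpha_*$ of $S$ into $\dot{b}$, contradicting that $\dot{b}$ is a branch. The main obstacle is precisely this amalgamation: since $q_c$ and $q_{c'}$ force incompatible information about $\dot{b}$, their compatibility as conditions in $Q_{T,\bar{f}}$ is itself the contradiction, and the delicate point is ensuring that the construction in the previous paragraph produces conditions whose isomorphism types in $M$ are aligned finely enough for the proper isomorphism condition to apply. This is where the specific combinatorics of $Q_{T,\bar{f}}$---promises as countable collections of pruned subtrees of $T^{[n]}$ together with ladder-coloring constraints that are insensitive to finitely many exceptions---does the real work.
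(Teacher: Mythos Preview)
Your reduction of the second assertion to the first via the tree of partial Dedekind cuts is correct and matches the paper's intent.

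The main argument, however, has a genuine gap at the amalgamation step. You want to produce $r \leq q_c, q_{c'}$ and then derive a contradiction from the fact that $q_c$ and $q_{c'}$ force incompatible information about $\dot b$. But the order on $Q_{T,\bar f}$ requires $X_r \restriction \alpha_{q} = X_q$ (end extension, not mere inclusion), so if $\alpha_{q_c} = \alpha_{q_{c'}} = \delta$ then $r \leq q_c, q_{c'}$ forces $X_{q_c} = X_{q_{c'}}$; and if in addition $\phi_{q_c} = \phi_{q_{c'}}$ then the two conditions differ only in their promise sets and are automatically compatible --- but then there is no reason they should force distinct nodes of $S$ into $\dot b$. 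Conversely, if the $(X,\phi)$-parts genuinely diverge along the two branches (which is what one needs to make the $\tau_{c\restriction n}$ diverge), the end-extension requirement blocks any common extension. The ``proper isomorphism condition'' from \cite{minimal_unctbl_types} does not furnish such an amalgamation: it is a device for transferring genericity across isomorphic models in an iteration, not a statement that isomorphic conditions in a single step have a common lower bound. So the sentence ``their compatibility as conditions in $Q_{T,\bar f}$ is itself the contradiction'' is exactly where the proof breaks --- you have not established compatibility, and in the interesting case you cannot.

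The paper takes a different route that exploits the promise mechanism directly. One fixes $s \in S_\delta$ and $\sigma \in T_\delta^{[n]}$ consistent with $p$ and shows there is $q \leq p$ in $M$ rejecting $s$ with $\sigma$ still consistent with $q$. If this fails, the set $W$ of all $\tau$ (with some witness $\bar s$) for which rejection is impossible is an uncountable subtree of $T^{[n]}$; adding its pruned part $U$ as a new promise yields a condition $p'$ which, one then checks, already decides $\dot b$ in the ground model. The point is that the promise $U$ forces every $(M,Q)$-generic extension of $p'$ to carry some $\tau \in U_\delta$, and the definition of $W$ pins down the branch. This is where the structure of $Q_{T,\bar f}$ is actually used, and it is a fundamentally different idea from splitting into a perfect tree of conditions.
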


\begin{proof}
Let $Q$ denote $Q_{T,\bar f}$ and let $\dot{b}$ be a $Q$-name which is forced by some $p \in Q$ to
be an uncountable branch in $S$ which is not in the ground model.
If $q$ is in $Q$ and $\sigma$ is in $T^{[n]}$ for some $n$, then we say that
$\sigma$ is \emph{consistent} with $q$ if the range of $\sigma \restriction \alpha_q$
is contained in $X_q$.

Let $M \in \Ecal(Q)$ with $p, \dot{b} \in M$ and set
$\delta = M \cap \omega_1$. 

\begin{claim}
If $\sigma \in T_\delta^{[n]}$ is consistent with $p$
and $s \in S_\delta$, then there is a condition 
$q \leq p$ in $M \cap Q$ such that 
$q \Vdash \check{s} \notin \dot{b}$ and 
such that $\sigma$ is consistent with $q$.
\end{claim}

\begin{proof}
By Lemma 5.5 in \cite{minimal_unctbl_types} we can find a decreasing sequence 
$\langle p_k: k \in \omega \rangle$ in $M$ such that:
\begin{itemize}

\item $p_0 =p$,

\item $p_{k+1}$ decides $\dot{b} \restriction \alpha_{p_k}$,

\item $\sigma$ is consistent with $p_k$ for all $k$,

\item $\langle p_k: k \in \omega \rangle$ has a lower bound in $M$.

\end{itemize}
\noindent
Thus without loss of generality we can assume that 
$p$ forces ${\check s \restriction \check \alpha_p} \in \dot{b}$.

Suppose for contradiction that for every $q \leq p$ in $M \cap Q$,
if $q \Vdash \check s \not \in \dot b$, then $\sigma$ is not consistent with
$q$.
Define $W$ to be the set of all $\tau \in T^{[n]}$ which are compatible with $\sigma \restriction \alpha_p$ 
and such that there exists an $\bar s \in S_{\Ht(\tau)}$ compatible with
$s \restriction \alpha_p$ and for all $q \leq p$,
if $q \Vdash \bar s \not \in \dot b$ and $\alpha_q \leq \Ht(\tau)$, then
$\range (\tau \restriction \alpha_q) \not \subset X_q$.
Since $W$ is definable from parameters in $M$, it is in $M$.
Observe that $W$ is downwards closed and that 
$s$ witnesses that $\sigma$ is in $W$.
Hence
by elementarily of $M$, $W$ is uncountable.
%Define $U$ to be $W$ together with all predecessors of $\sigma \restriction \alpha_p$.
Let $U$ be the set of all $\tau \in W$ which have uncountably many extensions in $W$.
Notice that $\sigma \restriction \alpha_p$ is in $U$ and thus
$p'=(\varphi_p,X_p,\mathcal{U}_p \cup \{U\})$ is a condition in $Q$.
 
For each $\tau \in U_\delta$ and $t \in S_\delta$, let $\varphi (\tau,t)$ be the assertion:
\emph{``whenever $r \leq p'$  is
$(M,Q)$-generic with 
$\range (\tau) \subset X_r$, 
$r \Vdash \check{t}\in \dot{b}$.''}
Notice that if $r$ is $(M,Q)$-generic, then so is $r \restriction \delta$.
It is easy to see that for every 
$\tau \in W_\delta$ there exists a unique $t \in S_\delta$ which extends $s\restriction \alpha_p$
such that 
$\varphi(\tau,t)$.
Moreover, observe that if $\tau_1$ and $\tau_2$ are in $U_\delta$ and $s_1,s_2$ are such that
$\phi(\tau_1,s_1)$ and $\phi(\tau_2,s_2)$, then we can find an $(M,Q)$-generic condition $r\leq p'$
which is consistent to both $\tau_1$ and $\tau_2$.
This implies $r\Vdash \check{s_1}= \check{s_2}$.
Thus $s \in S_\delta$ satisfies that $\phi(\tau,s)$ holds for every
$\tau \in U_\delta$.

%Therefore we can fix an $s$ in
%$S_\delta$ which extends  $s\restriction \alpha_p$ such that for all 
%$\tau \in W_\delta$, $\varphi (\tau, s)$ holds.

We now claim that $p' \Vdash \check{s}' \in \dot{b}$ for all $s' < s$.
Since such an $s'$ is necessarily in $M$, by elementarity it suffices to show
that if $p'' \leq p'$ is in $M$, then $p''$ has an extension $r$ which forces that
$\check{s}' \in \dot{b}$.
Because $Q$ is proper, $p''$ has an $(M,Q)$-generic extension $r$.
Let $\tau \in U_\delta$ be such that $\range (\tau) \subseteq X_r$.
Since $r\leq p'$ and $\varphi (\tau, s')$ holds, $r\Vdash \check{s}' \in \dot{b}$.
Thus we have established that $p' \Vdash \check{s}' \in \dot{b}$ for all
$s' < s$.
By elementarily, $\{ t \in S : p' \Vdash \check{t} \in \dot b\}$ is uncountable,
which implies that $p'$ decides $\dot b$, a contradiction.
\end{proof}

Returning to the main proof, by the claim we can find a condition
$\bar{p} \leq p$ such that $\bar{p}\Vdash S_\delta \cap \dot{b} = \emptyset$.
\end{proof}

\begin{thm}
Assume there is a supercompact cardinal.
Then there is forcing extension in which $(\textrm{A})$, $(\dagger)$, and $(*)$ hold.
\end{thm}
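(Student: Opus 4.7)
The plan is to mimic the standard consistency proof of $\PFA^+$, restricted to the subclass of completely proper, $(<\!\omega_1)$-proper forcings that do not add uncountable branches through $\omega_1$-trees. Starting from a model of GCH in which $\kappa$ is supercompact, I would fix a Laver function $f : \kappa \to V_\kappa$ and define a countable support iteration $\langle P_\alpha, \dot Q_\alpha : \alpha < \kappa\rangle$ of length $\kappa$. At each stage $\alpha$, if $f(\alpha)$ codes a $P_\alpha$-name for a $\sigma$-closed poset, let $\dot Q_\alpha$ be that poset; if $f(\alpha)$ codes a $P_\alpha$-name for a pair $(T, \bar g)$ with $T$ an Aronszajn tree and $\bar g$ a ladder system coloring, set $\dot Q_\alpha = \dot Q_{T,\bar g}$; otherwise take $\dot Q_\alpha$ trivial.

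First I would verify the iteration is well-behaved. Every iterand is completely proper and $(<\!\omega_1)$-proper -- by Lemma \ref{Qtf_lem} for the $Q_{T,\bar g}$, and trivially for $\sigma$-closed posets -- so Theorem \ref{nnr} ensures the iteration is proper and introduces no new reals, preserving CH. Every iterand adds no new uncountable branches to $\omega_1$-trees -- by Lemma \ref{Qtf_good} and the triviality for $\sigma$-closed posets -- so by Theorem \ref{nnb} the whole iteration shares this property. A standard reflection argument using supercompactness of $\kappa$ gives the $\kappa$-c.c., and hence $\kappa = \omega_2$ in the extension.

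Next I would verify each axiom in the extension. For $(\textrm{A})$, bookkeeping via the Laver function ensures every $(T, \bar g)$ in the extension is caught at some stage $\alpha$ with $\dot Q_\alpha = Q_{T,\bar g}$, whose generic produces a $T$-uniformization of $\bar g$. For $(\dagger)$, the standard supercompact argument, applied to triples $(P, \mathcal D, \dot S)$ consisting of a $\sigma$-closed poset, an $\aleph_1$-sized family of dense sets, and a stationary name, shows the iteration forces $\FA^+(\sigma\text{-closed})$, from which $(\dagger)$ follows by the remark after its definition. For $(*)$, given a non $\sigma$-scattered $L$ in the extension, consider the $\sigma$-closed poset $R_L$ whose conditions are countable continuous $\in$-chains in $\Ecal(L)$ ordered by end-extension, and apply $\FA^+(\sigma\text{-closed})$ to $R_L$ with the dense sets ``length at least $\alpha$'' for $\alpha < \omega_1$, ``all cuts of $L \cap M_\xi$ become visible at some later stage'' for each $\xi < \omega_1$, and ``a non-captured element of $L$ is eventually appended'' for each $\xi$ with $M_\xi \cap \hat L \in \Gamma(L)$, together with the stationary name $\dot S = \{\xi : \dot M_\xi \cap \hat L \in \Gamma(L)\}$. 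The resulting filter yields a chain $\langle M_\xi : \xi < \omega_1\rangle$ witnessing the three clauses of $(*)$.

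The principal obstacle is showing that $\dot S$ is forced stationary in $\omega_1$ by $R_L$. This is a density argument resting on the stationarity of $\Gamma(L)$ in $[\hat L]^\omega$ from Theorem \ref{sigma-scatt_char}: given a condition $p$ and a name $\dot C$ for a club in $\omega_1$, pick a countable elementary submodel $N$ of $H(\theta)$ with $\{p, \dot C, R_L\} \subset N$ and $N \cap \hat L \in \Gamma(L)$, and use $\sigma$-closedness to diagonalize against the countably many dense subsets of $R_L$ in $N$, producing an $(N, R_L)$-generic extension of $p$ whose last model is $N \cap H(\theta_L)$. This last model is indexed by $N \cap \omega_1$, forced into $\dot C$ by genericity, and lies in $\Gamma(L)$ by choice, giving the required witness for stationarity.
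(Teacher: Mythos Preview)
Your setup of the iteration and the verification of $(\textrm{A})$ and $(\dagger)$ are correct and match the paper. The gap is in your derivation of $(*)$: you attempt to obtain it as a formal consequence of $\FA^+(\sigma\textrm{-closed})$, but this implication is false. Indeed, $(*)$ implies CH (as the paper notes just after Fact~\ref{nokurepa}), whereas $\FA^+(\sigma\textrm{-closed})$ follows from $\PFA^+$ and is therefore consistent with $2^{\aleph_0}=\aleph_2$. So something in your list of dense sets must be defective.

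The problem is with the second clause of $(*)$, namely $\hat{L_0}\subset\bigcup_\xi M_\xi$. Your proposed dense set ``all cuts of $L\cap M_\xi$ become visible at some later stage'' is not a condition a single countable $p\in R_L$ can meet: a countable linear order may have continuum many Dedekind cuts, and a condition only records countably many models. Even if you split this into one dense set per cut, the cuts of the individual $L\cap M_\xi$ do not exhaust $\hat{L_0}$; an element of $\hat{L_0}$ need not be determined by any single level of the chain. In short, there is no $\aleph_1$-sized family of dense subsets of $R_L$ that forces the union of the chain to swallow $\hat{L_0}$, and this is exactly why $\FA^+(\sigma\textrm{-closed})$ alone is not enough.

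The paper does something genuinely stronger. Rather than invoking $\FA^+(\sigma\textrm{-closed})$ as a black box, it reruns the supercompact reflection argument for $R_L$ directly, using one extra preservation property of the iteration: every tail of $j(P_\kappa)$ is proper \emph{and} adds no new elements to $\hat L$ for linear orders $L$ of size $\aleph_1$ (this is the ``consequently'' clause of Lemma~\ref{Qtf_good}, combined with Theorem~\ref{nnb}). Thus the $R_L$-generic chain, which witnesses $(*)$ in the intermediate extension $V[G][H]$, remains a witness after forcing with the tail, because $\hat{L_0}$ acquires no new elements there; the existence of such a witness then reflects back to $V[G]$. The essential missing idea in your argument is this preservation of $\hat L$ by the tail forcing---without it the second clause of $(*)$ cannot be secured.
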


\begin{proof}
Let $V$ be a ground model with a supercompact cardinal $\kappa$.
By performing some preparatory forcing if necessary, we may assume that CH is true.
Mimicking the consistency proof of PFA (see \cite{yorkshireman} or \cite{set_theory:Jech}),
use a Laver function $\psi$ to build a countable support iteration
$\Seq{P_\alpha, \dot Q_\alpha : \alpha \in \kappa}$ such that:
\begin{itemize}

\item $\dot Q_\alpha$ is a $P_\alpha$-name in $V_\kappa$
for a partial order which is either $\sigma$-closed
or of the form $Q_{\dot T,\bar f}$;

\item if $\psi(\alpha)$ is a $P_\alpha$-name and $p \in P_\alpha$ forces that $\psi(\alpha)$ either $\sigma$-closed or of the form $Q_{\dot T,\bar f}$, then $p$ forces
$\dot Q_\alpha = \psi(\alpha)$.

\end{itemize}
Let $G \subset P_\kappa$ be a $V$-generic filter.
It is immediate that $V[G]$ satisfies $(\textrm{A})$.
By Lemma \ref{Qtf_lem} and Theorem \ref{nnr} the iteration does not add new reals and
thus $V[G]$ satisfies CH.
By Lemmas \ref{nnb} and \ref{Qtf_good}, every final segment of the iteration does not add new uncountable branches
to $\omega_1$-trees.
Arguing as in \cite{yorkshireman}, $V[G]$ satisfies $\FA^+(\sigma\textrm{-closed})$
and in particular $(\dagger)$.
  
We will now show that $(*)$ holds in $V[G]$.
Fix for a moment a non $\sigma$-scattered linear order $L$ in $V[G]$ and
let $Q$ be the set of all countable continuous $\in$-chains in $\Ecal(L)$ ordered by end extension. 
It is obvious that $Q$ is $\sigma$-closed and easily verified that
\[
\dot{S}=\{(\check{\xi},q): \xi \in \dom(q) \textrm{ and } q(\xi) \cap \hat L \in \Gamma(L)\}
\]
is a $Q$-name for a stationary subset of $\omega_1$.
Since $Q$ is countably closed, it does not add new elements to $\hat L$.
Thus if $H \subset Q$ is a $V[G]$-generic filter,
then $V[G][H]$ contains the desired witness to $(*)$ for $L$.
Moreover, this witness is preserved in any further generic extension by a proper forcing
in which $\hat L$ does not gain new elements.

The proof that $\FA^+(\sigma\textrm{-closed})$ holds in $V[G]$ can now be applied
in this situation to show that $(*)$ holds in $V$.
The only difference is that while in the verification of $\FA^+(\sigma\textrm{-closed})$
it is sufficient to know that the factor forcings are proper,
in our setting it is necessary to know that, additionally, the factor forcings
do not add new elements to the completions of linear orders.
As noted already, this follows from Lemmas \ref{nnb} and \ref{Qtf_good}.
\end{proof}

\section{Open questions}

\label{questions:sec}

We will conclude this paper  by mentioning some open questions which are natural in light of the
results obtained here and in \cite{no_real_Aronszajn}.
The first is a problem of Galvin.

\begin{question}(see \cite[Problem 4]{new_class_otp})
Must every minimal non $\sigma$-scattered linear order be a real type nor an Aronszajn type?
\end{question}

Of course it is consistent that this question has positive answer (this was first shown in \cite{no_real_Aronszajn}),
but at present it seems possible that this question could have a positive answer in ZFC (we conjecture, however,
that a negative answer is consistent).
We also don't know the answer to the following question.

\begin{question}
Must every minimal non $\sigma$-scattered linear order have cardinality $\aleph_1$?
\end{question}

Notice that if $\kappa > \aleph_1$ is a regular cardinal 
and $L = \{x_\alpha : \alpha \in S\}$
is a ladder system indexed by a nonreflecting stationary set $S \subset \kappa$.
consisting of ordinals of countable cofinality,
then the lexicographical ordering on $L$ is non $\sigma$-scattered but has no $\sigma$-scattered suborder
of cardinality $\aleph_1$ (of course this example fails to be minimal).

Finally, it is unclear whether Theorem \ref{basis_thm} can be sharpened so that the Baumgartner types
are all realized as suborders of a single Baumgartner type.

\begin{question}
Assume $\PFA^+$.
If two Baumgartner types are indexed by a common stationary subset of $\omega_1$,
must there be a non $\sigma$-scattered order which embeds into both of them?
\end{question}

\section*{Acknowledgements}
The research presented in this paper was supported in part by NSF grants DMS-1262019 and DMS-1600635.

%\bibliography{../global}{}

\begin{thebibliography}{10}

\bibitem{reals_isomorphic}
J.~E. Baumgartner.
\newblock All {$\aleph \sb{1}$}-dense sets of reals can be isomorphic.
\newblock {\em Fund. Math.}, 79(2):101--106, 1973.

\bibitem{bases_A-trees}
\line(1,0){30}~
\newblock Bases for {A}ronszajn trees.
\newblock {\em Tsukuba J. Math}, 9(1):31--40, 1985.

\bibitem{new_class_otp}
\line(1,0){30}~
\newblock A new class of order types.
\newblock {\em Ann. Math. Logic}, 9(3):187--222, 1976.

\bibitem{yorkshireman}
K.~J. Devlin.
\newblock The {Y}orkshireman's guide to proper forcing.
\newblock In {\em Surveys in set theory}, volume~87 of {\em London Math. Soc.
  Lecture Note Ser.}, pages 60--115. Cambridge Univ. Press, Cambridge, 1983.

\bibitem{sim_trans_LO}
B.~Dushnik and E.~W. Miller.
\newblock Concerning similarity transformations of linearly ordered sets.
\newblock {\em Bull. Amer. Math. Soc.}, 46:322--326, 1940.

\bibitem{tutorial_NNR}
T.~Eisworth and J.~Tatch~Moore.
\newblock Iterated forcing and the {C}ontinuum {H}ypothesis.
\newblock In {\em Appalachian set theory 2006-2012}, volume 406 of {\em London
  Math Society Lecture Notes Series}, pages 207--244. Cambridge University
  Press, 2013.

\bibitem{no_real_Aronszajn}
T.~Ishiu and J.~Tatch Moore.
\newblock Minimality of non $\sigma$-scattered orders.
\newblock {\em Fund. Math.}, 205(1):29--44, 2009.

\bibitem{set_theory:Jech}
T.~Jech.
\newblock {\em Set theory}.
\newblock Springer Monographs in Mathematics. Springer-Verlag, Berlin, 2003.
\newblock The third millennium edition, revised and expanded.

\bibitem{Fraisse_conj}
R.~Laver.
\newblock On {F}ra\"\i ss\'e's order type conjecture.
\newblock {\em Ann. of Math. (2)}, 93:89--111, 1971.

\bibitem{A-line_wqo}
C.~Martinez-Ranero.
\newblock Well-quasi-ordering {A}ronszajn lines.
\newblock {\em Fund. Math.}, 213(3):197--211, 2011.

\bibitem{MRP}
J.~Tatch Moore.
\newblock Set mapping reflection.
\newblock {\em J. Math. Log.}, 5(1):87--97, 2005.

\bibitem{linear_basis}
\line(1,0){30}~
\newblock A five element basis for the uncountable linear orders.
\newblock {\em Ann. of Math. (2)}, 163(2):669--688, 2006.

\bibitem{minimal_unctbl_types}
\line(1,0){30}~
\newblock $\omega_1$ and $-\omega_1$ may be the only minimal uncountable order
  types.
\newblock {\em Michigan Math. Journal}, 55(2):437--457, 2007.

\bibitem{A-line_univ}
\line(1,0){30}~
\newblock A universal {A}ronszajn line.
\newblock {\em Math. Res. Lett.}, 16(1):121--131, 2009.

\bibitem{FA_CH2}
\line(1,0){30}~
\newblock Forcing axioms and the continuum hypothesis. {P}art {II}:
  transcending {$\omega_1$}-sequences of real numbers.
\newblock {\em Acta Math.}, 210(1):173--183, 2013.

\bibitem{proper_forcing}
S.~Shelah.
\newblock {\em {P}roper and {I}mproper {F}orcing}.
\newblock Springer-Verlag, Berlin, second edition, 1998.

\bibitem{trees:Todorcevic}
S.~Todorcevic.
\newblock Trees and linearly ordered sets.
\newblock In {\em Handbook of set-theoretic topology}, pages 235--293.
  North-Holland, Amsterdam, 1984.

\end{thebibliography}
%\bibliographystyle{mrl}

\def\Dbar{\leavevmode\lower.6ex\hbox to 0pt{\hskip-.23ex \accent"16\hss}D}

\end{document}